\documentclass[12pt]{article}

\usepackage{amsmath}
\usepackage{amssymb}

\setlength{\parskip}{2pt}


\newcommand{\brk}[1]{\left(#1\right)}          
\newcommand{\Brk}[1]{\left[#1\right]}          
\newcommand{\BRK}[1]{\left\{#1\right\}}        
\newcommand{\Average}[1]{\left<#1\right>}      
\newcommand{\Abs}[1]{\left| #1 \right|}        
\newcommand{\Norm}[1]{\left\| #1 \right\|}     

\newcommand{\pd}[2]{\frac{\partial#1}{\partial#2}}
\newcommand{\deriv}[2]{\frac{d#1}{d#2}}


\newcommand{\bs}[1]{\boldsymbol{#1}}

\newcommand{\grad}{\boldsymbol{\nabla}}
\providecommand{\half}{\frac{1}{2}}

\newcommand{\R}{{\mathbb R}}

\newcommand{\Textand}{\qquad\text{ and }\qquad}

\newcommand{\str}{{\boldsymbol{\sigma}}}
\newcommand{\btau}{{\boldsymbol{\tau}}}
\newcommand{\vel}{{\boldsymbol{u}}}
\newcommand{\vort}{{\boldsymbol{\omega}}}
\newcommand{\gvel}{{\grad\vel}}

\newcommand{\x}{{\boldsymbol{x}}}
\newcommand{\y}{{\boldsymbol{y}}}

\newcommand{\F}{{\boldsymbol{F}}}

\renewcommand{\div}{\operatorname{div}}
\newcommand{\tr}{\operatorname{tr}}
\newcommand{\PV}{\operatorname{(P.V.)}}

\newcommand{\e}{{\varepsilon}}
\newcommand{\Fe}{{\F_\e}}
\newcommand{\stre}{{\str_\e}}
\newcommand{\stret}{{\str_{\e'}}}
\newcommand{\vele}{{\vel_\e}}
\newcommand{\velet}{{\vel_{\e'}}}
\newcommand{\gvele}{{\gvel_\e}}
\newcommand{\gvelet}{{\gvel_{\e'}}}
\newcommand{\Je}{{J_\e}}
\newcommand{\Jet}{{J_{\e'}}}

\newcommand{\Cinf}{C^\infty}

\newcommand{\Hm}{H^m(\R^3)}
\newcommand{\Hk}{H^k(\R^3)}
\newcommand{\Hmp}{H^{m'}(\R^3)}
\newcommand{\Ltwo}{L^2(\R^3)}
\newcommand{\Linf}{L^\infty(\R^3)}

\newcommand{\NormH}[2]{\|#1\|_{#2}}
\newcommand{\NormLinf}[1]{\|#1\|_{L^\infty}}
\newcommand{\NormLq}[2]{\|#1\|_{L^{#2}}}

         \newtheorem{thm}{Theorem}[section]
          \newtheorem{prop}[thm]{Proposition}
          \newtheorem{lem}[thm]{Lemma}

\newenvironment{proof}{{\flushleft \emph{Proof}:}}{$\blacksquare$}

\numberwithin{equation}{section}

\title{A Beale-Kato-Majda breakdown criterion for an Oldroyd-B fluid  in the creeping flow regime}

\author{Raz Kupferman \footnotemark[1]
\and
Claude Mangoubi  \footnotemark[2]
\and
Edriss S. Titi \footnotemark[3]
}

\begin{document}


\maketitle

\renewcommand{\thefootnote}{\fnsymbol{footnote}}
\footnotetext[1]{Institute of Mathematics, The Hebrew University,
Jerusalem 91904 Israel}
\footnotetext[2]{Institute of Mathematics, The Hebrew University,
Jerusalem 91904 Israel and CERMICS, \'Ecole Nationale des Ponts et Chauss\'es,
77455 Marne la Vall\'ee, France}
\footnotetext[3]{Faculty of Mathematics and Computer Science, The Weizmann Institute of Science,
Rehovot 76100 Israel and
Department of Mathematics and Department of Mechanical and Aerospace Engineering, University of California, Irvine CA 92697-3875}
\renewcommand{\thefootnote}{\arabic{footnote}}

\begin{abstract}
We derive a criterion for the breakdown of solutions to the Oldroyd-B model in $\R^3$ in the limit of zero
Reynolds number (creeping flow). If the initial stress field is in the Sobolev space $\Hm$, $m> 5/2$, then
either a unique solution exists within this space indefinitely, or, at the   time where the solution breaks
down, the time integral of the $L^\infty$-norm of the stress tensor must diverge. This result is analogous to the
celebrated Beale-Kato-Majda breakdown criterion for the inviscid Eluer equations of incompressible fluids.
\end{abstract}



\pagestyle{myheadings}
\thispagestyle{plain}
\markboth{R. KUPFERMAN, C. MANGOUBI \& E. TITI}{BEALE-KATO-MAJDA CRITERION FOR AN OLDROYD-B FLUID}

\section{Introduction}
The Oldroyd-B model is a classical model for dilute solutions of polymers suspended in a viscous incompressible
solvent \cite{BAH87}.  Although it suffers, as a physical model derived from microscopic dynamics,
from numerous shortcomings (e.g., polymers are allowed to stretch indefinitely), it is often considered
as a prototypical model for viscoelastic fluids, and has therefore been the focus of both analytical and
numerical studies.

At present, there is no global-in-time existence theory for the
Oldroyd-B model. The notable difference between the Oldroyd-B model
and its Newtonian counterpart, the incompressible Navier-Stokes equations, is that
in the viscoelastic case, global-in-time existence has not even been
established in two dimensions nor in the creeping flow regime, i.e.,
when the momentum equation is a Stokes system. The reason for this
difference can be understood by observing structural similarities
between the inertia-less Oldroyd-B model and the Euler
equations (in three dimensions), or the 2D quasi-geostrophic flow equations
(in two dimensions) \cite{CMT94}.

Since the early 1970s, numerical simulations of the Oldroyd-B model
(as well as other viscoelastic models) have been infested by
stability and accuracy problems that arise at frustratingly low
values of the elasticity parameter (the Weissenberg number)
\cite{Keu86,Keu00}. While some of these difficulties have been
elucidated \cite{FK05}, it is to a large extent still a mystery why
computations break down in the low-Reynolds-high-Weissenberg regime.
As is often the case in such situations, numerical data are by
themselves not sufficient to explain the reasons for this breakdown.
In the absence of a well-posedness theory, it is not even clear in
what spaces solutions have to be sought. Thus, the development of
such a theory is of major importance for both theoretical and
practical purposes.

This situation is analogous to that of incompressible Newtonian fluids at high
Reynolds number, where global-in-time existence has not yet been
established.
For a Newtonian fluid (in three dimensions), however, there is a
prominent observation due to Beale, Kato and Majda (BKM)
\cite{BKM84}, which states a necessary  and sufficient
condition for the breakup of
solutions at finite time. Specifically, the Euler equations of incompressible inviscid fluids
in vorticity formulation take the form
\begin{equation}
\pd{\vort}{t} + (\vel\cdot\grad)\vort = (\gvel)^T\vort,
\label{eq:euler}
\end{equation}
with initial condition $\vort(\cdot,0) = \vort_0$. Here $\vort=\nabla\times\vel$ is
the vorticity and $\vel$ is the velocity field. The BKM theorem
states that if $\vort_0$ belongs to the Sobolev space $\Hm$,
$m>\frac32$, then either there exists a solution
$\vort(\cdot,t)\in\Hm$ for all times, or, if $T^*$ is the maximal
time of existence of a solution in $\Hm$, then
\[
\lim_{t\nearrow T^*} \int_0^t \NormLinf{\vort(\cdot,s)}\,ds = \infty,
\]
and in particular,
\[
\limsup_{t\nearrow T^*}\NormLinf{\vort(\cdot,t)}=\infty.
\]
That is, the breakup of solutions in
any Sobolev norm necessitates the divergence of the $L^\infty$-norm
of the vorticity. The practical implication of this theorem is that
breakdown cannot be attributed, say, to the failure of some high
derivative. The blowup of the vorticity itself, in the
supremum norm, is the signature of any finite-time
breakdown. For another criterion of singularity formation see \cite{CFM96};
for up-to-date surveys see \cite{BT07,Con07}.

The goal of this paper is to establish a similar result for the
three-dimensional Oldroyd-B model, in the zero-Reynolds number
regime. In this regime, a closed equation can be written for the
polymeric stress field $\str= \str(\x,t)$; this
equation is similar to the vorticity equation \eqref{eq:euler}.
We start by establishing the local-in-time existence of solutions in
any Sobolev space $\Hm$, $m> 5/2$. Following then the approach of
BKM, we prove that if the initial stress is in $\Hm$, then either a
solution exists for all time, or, if $T^*$ is the maximum existence
time, then
\[
\lim_{t\nearrow T^*} \int_0^t \NormLinf{\str(\cdot,s)}\,ds = \infty.
\]
This result is independent of the Weissenberg number, and in fact,
holds even if one sets the Weissenberg number to be infinite. From a
theoretical point of view, this breakdown condition  implies that
global-in-time well-posedness hinges on an a-priori bound for the
supremum norm of the stress.

Recent work along these lines comprises a BKM-type analysis by
Chemin and Masmoudi \cite{CM01}. The notable difference between
their analysis and the present work is that they treat the Oldroyd-B
model including inertia, however, their analysis is restricted to
two-dimensional flows. Lin et al. \cite{LLZ05} analyze the inertial
Oldroyd-B model without the relaxation term (infinite Weissenberg
number) and establish global-in-time  existence for small initial
data. Finally, Constantin \cite{Con05} studies a class of kinetic
models in the form of a Stokes equation coupled to a nonlinear
Fokker-Planck equation, for which he proves global-in-time
existence. The extension to a two-dimensional Navier-Stokes system
coupled to a nonlinear Fokker-Planck equation appears in Constantin
et al. \cite{CFTZ07} and Constantin and Masmoudi \cite{CM07}. In both cases the analysis benefits from the
fact that the polymeric stress remains bounded by construction,
which as our analysis shows, is the key to well-posedness.

\section{The model}
The Oldroyd-B model describes a fluid in which polymer molecules are
suspended in a viscous incompressible solvent. The equations of
motion in the creeping flow regime are
\begin{equation}
\begin{gathered}
0 =  -\grad p + \nu_s \Delta\vel +  \div\str \\
\div\vel = 0 \\
\pd{\str}{t} + (\vel\cdot\grad)\str - (\gvel)^T\str - \str (\gvel) =
-\frac{1}{\lambda}\str +\frac{\nu_p}{\lambda} \Brk{\gvel + (\gvel)^T},
\end{gathered}
\label{eq:the_model}
\end{equation}
where $\vel$ is the velocity of the fluid, $p$ is the hydrostatic
pressure, $\str$ is the extra-stress tensor due to the polymer
molecules, $\nu_s$ is the solvent viscosity, $\nu_p$ is the
polymeric viscosity, and $\lambda$ is the elastic relaxation time.
The velocity gradient is defined with components $(\gvel)_{ij}=
\partial u_j/ \partial x_i$.

The first two equations in \eqref{eq:the_model} are a Stokes system
for an incompressible fluid, whereas the third equation is the
Maxwell constitutive equation  for the extra-stress \cite{BAH87}.
The flow is assumed to take place in the unbounded three-dimensional
space $\R^3$. Initial data need only be prescribed for the stress,
$\str(\x,0) = \str_0(\x)$.

The system \eqref{eq:the_model} can be turned into a closed equation
for $\str$, by solving the Stokes system, and  expressing the flow
field in terms of the stress field. Specifically, the induced
velocity field is given by
\begin{equation*}
u_j(\x) = \frac{1}{8\pi\nu_s} \int_{\R^3} M^{(0)}_{jl}(\y)
\partial_k\sigma_{kl}(\x-\y)\,d\y, 
\end{equation*}
where
\[
M^{(0)}_{jl}(\y) = -\frac{\delta_{jl}}{|y|} + \frac{y_jy_l}{|y|^3}
\]
is the Stokes kernel (see Galdi \cite{Gal94}, pp 189--195). Using
integration by parts, it can be rewritten as
\begin{equation}
u_j(\x) = \frac{1}{8\pi\nu_s} \int_{\R^3} M^{(1)}_{jkl}(\y) \sigma_{kl}(\x-\y)\,d\y,
\label{eq:solve_stokes_comp}
\end{equation}
where
\[
M^{(1)}_{jkl}(\y) = -\frac{y_j\delta_{kl}}{|y|^3} +  \frac{3y_jy_ky_l}{|y|^5}.
\]
Here and below we adopt the Einstein summation convention, whereby
repeated indexes imply a summation unless otherwise specified. In
tensor notation we write \eqref{eq:solve_stokes_comp} as
\begin{equation}
\vel(\x) = \frac{1}{8\pi\nu_s} \int_{\R^3} \bs{M}^{(1)}(\y) : \str(\x-\y)\,d\y,
\label{eq:solve_stokes}
\end{equation}
where for 2-tensors $\bs{a},\bs{b}$, the $:$ product  is defined by
$\bs{a}:\bs{b} = \tr(\bs{a}^T \bs{b})$.
We then rewrite the constitutive equation as a
closed evolution equation for the stress field,
\begin{equation}
\deriv{\str}{t} = \bs{F}(\str), \qquad \str(\cdot,0) = \str_0,
\label{eq:closed_str}
\end{equation}
where
\begin{equation}
\bs{F}(\str) = - (\vel\cdot\grad)\str + (\gvel)^T\str + \str (\gvel)
-\frac{1}{\lambda}\str +\frac{\nu_p}{\lambda} \Brk{\gvel + (\gvel)^T},
\label{eq:F}
\end{equation}
and $\vel$ is given by \eqref{eq:solve_stokes}. Equation
\eqref{eq:closed_str} is viewed as an evolution equation or an
ordinary differential equation (ODE) in an infinite-dimensional
function space. We observe that the solution $\str$ is a symmetric
tensor whenever $\str_0$ is symmetric.

For later use, we derive the linear relation between the stress
field $\str$ and the velocity gradient $\gvel$, obtained by
differentiating \eqref{eq:solve_stokes} and integrating by parts.
This yields a singular integral  (the integrand is a homogeneous
function of degree $-3$ that averages to zero on the unit sphere),
from which one has to extract the singular
part,
\[
\partial_i u_j(\x) = -\frac{1}{5\nu_s}\brk{ \sigma_{ij}(\x) - \frac{1}{3} \delta_{ij}\tr\sigma(\x)}
+ \frac{1}{8\pi\nu_s} \PV \int_{\R^3} M_{ijkl}^{(2)}(\y)\sigma_{kl}(\x-\y) \,d\y,
\]
where
\[
M^{(2)}_{ijkl}(\y) = \frac{\delta_{ij}\delta_{kl}}{|y|^3} -
3 \frac{y_iy_j\delta_{kl} + 2y_jy_l\delta_{ki}+ \delta_{ij}y_ky_l}{|y|^5}
+\frac{15y_iy_jy_ky_l}{|y|^7},
\]
and $\PV$ stands for the principal value of a singular integral.
In tensor notation,
\begin{equation}
\gvel(\x) = -\frac{1}{5\nu_s}\brk{\str(\x) - \frac{\bs{I}}{3} \tr\str(\x)}  +
\frac{1}{8\pi\nu_s} \PV\int_{\R^3} \bs{M}^{(2)}(\y) : \str(\x-\y)\,d\y.
\label{eq:gvel}
\end{equation}

\section{Local-in-time existence}
\label{sec:local}
In this section we prove the local existence and uniqueness of
solutions to the  Oldroyd-B model \eqref{eq:the_model}. The proof is
based on energy  methods, and closely follows the existence proof
for the Euler and Navier-Stokes equations in Majda and Bertozzi
\cite{MB02}.   Differences between the two cases are highlighted
along the treatment.

We denote by $\Hm$ the Sobolev spaces of scalar, vector and tensor
fields in $\R^3$. The corresponding norms are denoted by
$\NormH{\cdot}{m}$ and are  defined  by
\[
\begin{aligned}
\NormH{f}{m} &= \brk{\sum_{|\alpha|\le m} \NormH{D^\alpha f}{0}^2}^{1/2},
\end{aligned}
\]
where here $f$ denotes either a scalar, a vector or a tensor field
and $\alpha=(\alpha_1,\alpha_2,\alpha_3)$ is a multi-index of
derivatives; the norm $\NormH{\cdot}{0}$ is the $\Ltwo$-norm. For
example, if $f = \bs{f}$ is a tensor field with components $f_{ij}$,
then
\[
\NormH{D^\alpha \bs{f}}{0}^2 =
\sum_{i,j=1}^3 \int_{\R^3} \Brk{\frac{\partial^{|\alpha|}}{
\partial x_1^{\alpha_1} \partial x_2^{\alpha_2}\partial x_3^{\alpha_3}} f_{ij}(\x) }^2\,d\x.
\]
We denote by $\NormLq{\cdot}{q}$ and $\NormLinf{\cdot}$  the
$L^q(\R^3)$ and $\Linf$ norms, respectively. Throughout this section
and the next one, we use the symbols $C$, $K$,  to
denote either positive constants, or, depending on the context,
bounded functions of their arguments.

Our local-in-time existence theorem is:

\begin{thm}
\label{th:local} Let $\str_0\in \Hm$ for $m>5/2$. Then there exists
a time $T>0$ depending on $\NormH{\str_0}{m}$ only, so that  the
Oldroyd-B system \eqref{eq:the_model}, or equivalently the Hilbert
space-valued ODE \eqref{eq:closed_str} has a solution $\str$ in the
class
\begin{equation}
 \str\in C([0,T];\Hm)\cap C^1([0,T];H^{m-1}(\R^3)).
\end{equation}
\end{thm}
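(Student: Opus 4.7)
I would adapt the mollification and energy-method proof of local existence for the Euler equations in Majda and Bertozzi to the nonlocal operator $\bs{F}$. First introduce a standard Friedrichs mollifier $\Je$ and consider the regularized evolution
$$\deriv{\stre}{t} = \Je\,\bs{F}(\Je\stre), \qquad \stre(\cdot,0) = \Je\str_0,$$
viewed as an ODE in the Hilbert space $\Hm$. The velocity $\vele$ and its gradient $\gvele$ are reconstructed from $\Je\stre$ through the nonlocal formulas \eqref{eq:solve_stokes} and \eqref{eq:gvel}; the former is convolution with a kernel of negative homogeneity and the latter is a Calder\'on-Zygmund singular integral plus a pointwise multiple of $\stre$, so both maps are bounded linear operators on $\Hm$. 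Combined with the smoothing of $\Je$, this makes the right-hand side Lipschitz on bounded sets of $\Hm$, and Picard-Lindel\"of produces a unique maximal solution $\stre \in C^1([0,T_\e);\Hm)$.

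The core step is an $\e$-uniform a priori estimate in $\Hm$. For each multi-index $|\alpha|\le m$, compute $\frac{d}{dt}\NormH{D^\alpha \stre}{0}^2$ and bound the contributions of $\bs{F}$ term by term: the transport contribution $(\vele\cdot\grad)\stre$ is handled by a Kato-Ponce type commutator estimate together with the cancellation $\int (D^\alpha\stre)\cdot(\vele\cdot\grad)D^\alpha\stre\,d\x=0$ coming from $\div\vele=0$; the stretching terms $(\gvele)^T\stre+\stre(\gvele)$ are controlled by Moser's product inequality; and the relaxation and source terms are controlled directly by the boundedness of $\str\mapsto\gvel$ from $\Hm$ to $\Hm$. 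Since $m>5/2>3/2$, Sobolev embedding yields $\NormLinf{\stre}+\NormLinf{\gvele}\le C\NormH{\stre}{m}$. Assembling these bounds gives
$$\deriv{}{t}\NormH{\stre}{m}^2 \le C\brk{1+\NormH{\stre}{m}}\NormH{\stre}{m}^2,$$
which by a comparison argument yields a time $T>0$ and a bound $M$ depending only on $\NormH{\str_0}{m}$ such that $\sup_{[0,T]}\NormH{\stre(\cdot,t)}{m}\le M$ uniformly in $\e$. This is the hardest part of the argument: one must verify that the nonlocal formula \eqref{eq:gvel} produces no loss of derivative relative to $\str$, and that the mollification preserves the divergence-free cancellation used to eliminate the top-order transport term.

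To pass to the limit $\e\to 0$, I would show that $\{\stre\}$ is Cauchy in $C([0,T];\Ltwo)$: subtracting the equations for $\stre$ and $\stret$ and testing against their difference produces an $L^2$ inequality whose coefficients are tamed by the uniform $\Hm$-bound, and Gr\"onwall closes. Interpolation with the $\Hm$-bound gives strong convergence in $C([0,T];\Hmp)$ for every $m'<m$, which suffices to pass to the limit in \eqref{eq:closed_str} and identify the limit $\str$ as a solution with $\str\in L^\infty([0,T];\Hm)$ by weak-$*$ compactness. Strong continuity $\str\in C([0,T];\Hm)$ in place of only weak continuity is recovered by a Bona-Smith argument exploiting the time-reversibility of the energy estimate, after which the evolution equation itself delivers $\deriv{\str}{t}\in C([0,T];H^{m-1}(\R^3))$. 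Uniqueness follows from the same $L^2$ difference estimate applied to any two $\Hm$-solutions sharing a common $\Hm$-bound.
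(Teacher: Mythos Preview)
Your proposal is correct and follows essentially the same route as the paper: mollified ODE plus Picard, $\e$-uniform $H^m$ energy estimates exploiting the divergence-free cancellation and Kato--Ponce/Moser product bounds together with the Calder\'on--Zygmund control of $\gvel$ by $\str$, $L^2$-Cauchy plus interpolation to pass to the limit, and time-reversibility to upgrade weak to strong $H^m$-continuity. The only cosmetic differences are that the paper mollifies only the advection term (writing $\Je[\vele\cdot\grad(\Je\stre)]$ with $\vele$ built from $\stre$ itself) and keeps the unmollified initial datum $\str_0$, whereas you mollify the full right-hand side and the data; both schemes preserve the top-order cancellation and yield the same uniform bound.
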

Observe that system \eqref{eq:the_model} is time-reversible, hence Theorem~\ref{th:local} is also valid backward in time.

Since the proof is long and technical, we describe here its outline,
and prove each step in a separate subsection. Standard definitions
and inequalities are grouped in  Appendix~\ref{app:ineq}.

\paragraph{Subsection~\ref{subsec:mol}}
We start by constructing smooth approximations to $\str$.
We consider a mollified version of \eqref{eq:closed_str},
\begin{equation}
\deriv{\stre}{t} = \Fe(\stre), \qquad \stre(\cdot,0) = \str_0,
\label{eq:mol_intro}
\end{equation}
where $\e>0$ is the mollification parameter,
\[
\begin{split}
\Fe(\stre) &= -\Je [\vele\cdot\grad(\Je\stre)] + ( \grad \vele)^T\stre + \stre \grad  (\vele) \\
&-\frac{1}{\lambda}\stre +\frac{\nu_p}{\lambda} \Brk{\grad\vele +(\grad \vele )^T},
\end{split}
\]
$\vele$ is given by the integral \eqref{eq:solve_stokes} with $\str$ replaced by $\stre$,
and
the mollification operator $\Je$ is defined by \eqref{eq:Je}.  We then prove

\begin{prop}
\label{prop:mol}
Let $\str_0\in \Hm$ for $m> 3/2$. Then there exists a time $T_\e>0$ depending
on $\NormH{\str_0}{m}$ only, so that  \eqref{eq:mol_intro} has a unique solution
\[
\stre\in C^1([0,T_\e);\Hm).
\]
\end{prop}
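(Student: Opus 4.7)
The plan is to view the mollified equation \eqref{eq:mol_intro} as an ordinary differential equation $d\stre/dt=\Fe(\stre)$ in the Hilbert space $\Hm$, and then invoke the Picard--Lindel\"of theorem for ODEs in a Banach space. The verification has two stages. First, for each fixed $\e>0$, I would show that $\Fe:\Hm\to\Hm$ is well-defined, bounded on bounded sets, and locally Lipschitz, so that Picard produces some local solution. Second, I would derive an a priori energy estimate for $\NormH{\stre}{m}$ whose lower-bound time of validity depends only on $\NormH{\str_0}{m}$, so that the Picard solution can be extended to an interval $[0,T_\e)$ with length independent of $\e$.

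For the first stage, the three building blocks are: the Friedrichs mollifier $\Je$ is a contraction on $\Hm$ and gains arbitrarily many derivatives with an $\e$-dependent constant; formula \eqref{eq:gvel} expresses $\gvele$ as the sum of a pointwise multiple of $\stre$ and a principal-value singular integral whose kernel $\bs{M}^{(2)}$ is homogeneous of degree $-3$ with vanishing spherical average, so the associated operator is a zero-order Calder\'on--Zygmund operator, bounded on $L^2$ and commuting with derivatives, giving
\[
\NormH{\gvele}{m}\le C\,\NormH{\stre}{m};
\]
and finally the Sobolev algebra estimate $\NormH{fg}{m}\le C\NormH{f}{m}\NormH{g}{m}$ valid for $m>3/2$. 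Applied term-by-term to $\Fe$ (the linear relaxation; the linear forcing $(\nu_p/\lambda)(\gvele+\gvele^T)$; the bilinear stretching $\gvele^T\stre+\stre\gvele$; and the transport $\Je[\vele\cdot\grad(\Je\stre)]$, where the outer $\Je$ absorbs the fact that $\vele$ itself need not lie in $\Ltwo$), this yields a quadratic bound $\NormH{\Fe(\stre)}{m}\le C(1+\NormH{\stre}{m}^2)$ and a matching local Lipschitz estimate. Picard iteration then produces a unique solution $\stre\in C^1([0,T'_\e);\Hm)$.

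For the second stage I would differentiate $\NormH{\stre}{m}^2$ along the flow and handle each contribution. Pairing $D^\alpha\stre$ with $D^\alpha\Je[\vele\cdot\grad(\Je\stre)]$ and using the self-adjointness of $\Je$ splits the transport term into a piece that cancels by integration by parts thanks to $\div\vele=0$, plus commutator remainders of Kato--Ponce type bounded by $C\NormLinf{\gvele}\NormH{\stre}{m}^2$. The stretching and forcing terms are estimated similarly, with $\NormLinf{\gvele}$ controlled by $\NormH{\stre}{m}$ through the Sobolev embedding $\Hm\hookrightarrow\Linf$ for $m>3/2$ together with the zero-order singular integral bound from stage one. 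The resulting inequality
\[
\deriv{}{t}\NormH{\stre}{m}^2\le C\brk{1+\NormH{\stre}{m}^3}
\]
has $C$ independent of $\e$; comparison with the scalar ODE produces a time $T_\e>0$ depending only on $\NormH{\str_0}{m}$ on which $\NormH{\stre(t)}{m}$ stays finite, and the standard Picard continuation criterion then promotes the local solution to all of $[0,T_\e)$.

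The hard part will be the $\e$-uniform energy estimate: the Calder\'on--Zygmund bound on $\gvele$ in terms of $\stre$ is what closes the inequality without derivative loss, and the two $\Je$'s in the transport term must be placed precisely so that the incompressibility cancellation $\int D^\alpha\stre:(\vele\cdot\grad D^\alpha\stre)=0$ survives the mollification. Apart from those structural points, the argument parallels the corresponding mollified-existence step in the Majda--Bertozzi treatment of the Euler equations.
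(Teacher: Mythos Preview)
Your first stage is correct and is precisely the paper's proof: verify that $\Fe$ is locally Lipschitz on $\Hm$ using the Calder\'on--Zygmund bound $\NormH{\gvele}{m}\le C\NormH{\stre}{m}$, the Banach-algebra property of $\Hm$ for $m>3/2$, and the smoothing properties of $\Je$, then invoke Picard. That alone establishes Proposition~\ref{prop:mol}. The subscript on $T_\e$ is deliberate: this time is allowed to depend on $\e$ (the phrase ``depending on $\NormH{\str_0}{m}$ only'' means the dependence on $\str_0$ is through its norm, not that the time is $\e$-uniform).

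Your second stage is both unnecessary for this proposition and contains a gap under the stated hypothesis $m>3/2$. The $\e$-independent energy estimate is the content of the separate Proposition~\ref{prop:common}, which is stated under the stronger assumption $m>5/2$, and the reason is exactly the commutator term you describe. Applying the Kato--Ponce inequality \eqref{eq:calc_ineq2} to $D^\alpha[\vele\cdot\grad(\Je\stre)]-\vele\cdot D^\alpha\grad(\Je\stre)$ produces, in addition to the $\NormLinf{\gvele}\,\NormH{\grad(\Je\stre)}{m-1}$ piece you kept, a second term $\NormH{\vele}{m}\,\NormLinf{\grad(\Je\stre)}$. Bounding $\NormLinf{\grad(\Je\stre)}$ by $\NormH{\stre}{m}$ \emph{uniformly in $\e$} requires the Sobolev embedding $H^{m-1}\hookrightarrow L^\infty$, i.e.\ $m>5/2$; for $3/2<m\le 5/2$ only an $\e$-dependent bound is available. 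The paper makes this observation explicitly in Subsection~\ref{subsec:common}. So your claimed inequality $\tfrac{d}{dt}\NormH{\stre}{m}^2\le C(1+\NormH{\stre}{m}^3)$ with $C$ independent of $\e$ does not close for merely $m>3/2$.
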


The main reason for introducing the mollified equation is to enable us to use classical theory of evolution equations (ODEs) in Banach spaces to prove short time existence of unique solutions. Most importantly, the solutions of the mollified equation are regular enough, which is enabling us to use classical tools for deriving estimates without a need for additional justification.

Finally, we also observe that by the uniqueness of the solution of \eqref{eq:mol_intro}, the tensor $\stre$ is symmetric whenever $\str_0$ is symmetric. We will be using this fact later in our estimates.

\paragraph{Subsection~\ref{subsec:common}}
Using the continuation theorem for autonomous ODEs, and an  a-priori estimate of the form
\begin{equation}
\sup_{0\le t\le T} \NormH{\stre}{m} \le K(\NormH{\str_0}{m},T),
\label{eq:unif_bound}
\end{equation}
where $T = T(\NormH{\str_0}{m})$ is independent of $\e$, we show that the family of  mollified solutions $\stre$ can be continued uniformly up to a common time $T$. Here we need a slightly higher degree of regularity:

\begin{prop}
\label{prop:common}
Let $\str_0\in \Hm$ for $m> 5/2$.  Then there exists a time $T=T(\NormH{\str_0}{m})>0$ independent of $\e$, such that the mollified equation \eqref{eq:mol_intro} has a unique solution $\stre\in C^1([0,T],\Hm)$, satisfying the uniform bound \eqref{eq:unif_bound}.
\end{prop}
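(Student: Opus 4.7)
The plan is to derive an $\Hm$ a priori estimate on the mollified solution whose right-hand side is controlled by a continuous function of $\NormH{\stre}{m}$ uniformly in $\e$, and then combine this estimate with the standard continuation criterion for Banach-space ODEs. By Proposition~\ref{prop:mol} the function $\stre$ already belongs to $C^1([0,T_\e);\Hm)$, so I may legitimately apply $D^\alpha$ for any multi-index $|\alpha|\le m$ and differentiate $\NormH{D^\alpha\stre}{0}^2$ in time using \eqref{eq:mol_intro}.

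The right-hand side $\Fe(\stre)$ splits into four groups of terms, each of which I would bound as follows. First, for the mollified transport term $-\Je[\vele\cdot\grad(\Je\stre)]$, I invoke the self-adjointness of $\Je$, the uniform-in-$\e$ boundedness of mollifiers on $\Hm$, the incompressibility of $\vele$ (which is preserved by the Stokes representation \eqref{eq:solve_stokes}), and a Kato--Ponce commutator inequality to obtain a bound of the form $C\bigl(\NormLinf{\gvele}+\NormH{\gvele}{m}\bigr)\NormH{\stre}{m}^2$. Second, since $\Hm$ is a Banach algebra for $m>3/2$, the stretching terms $(\gvele)^T\stre+\stre\gvele$ contribute at most $C\NormH{\gvele}{m}\NormH{\stre}{m}^2$. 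Third, the relaxation term $-\stre/\lambda$ contributes trivially. Fourth, the forcing $(\nu_p/\lambda)(\gvele+\gvele^T)$ contributes $C\NormH{\gvele}{m}\NormH{\stre}{m}$.

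The closure of these estimates in terms of $\NormH{\stre}{m}$ alone rests on the representation \eqref{eq:gvel}: the algebraic part is pointwise bounded and the singular integral is bounded on every $\Hk$ by Calder\'on--Zygmund theory, so that $\NormH{\gvele}{k}\le C\NormH{\stre}{k}$ for each $k\ge 0$. This is where the threshold $m>5/2$ becomes decisive: the Sobolev embedding $H^{m-1}(\R^3)\hookrightarrow\Linf$, valid because $m-1>3/2$, gives $\NormLinf{\gvele}\le C\NormH{\gvele}{m-1}\le C\NormH{\stre}{m}$, which is precisely what the transport commutator estimate demands. Assembling the four bounds yields a differential inequality of the form $\tfrac{d}{dt}\NormH{\stre}{m}^2\le C\bigl(1+\NormH{\stre}{m}\bigr)\NormH{\stre}{m}^2 + C\NormH{\stre}{m}$ with the constant $C$ independent of $\e$.

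An elementary ODE comparison argument then produces a time $T=T(\NormH{\str_0}{m})>0$, independent of $\e$, on which the uniform estimate \eqref{eq:unif_bound} holds. Because the local-existence time $T_\e$ delivered by Proposition~\ref{prop:mol} depends only on the instantaneous value of $\NormH{\stre}{m}$, the standard continuation theorem for ODEs in the Hilbert space $\Hm$ shows that $\stre$ cannot cease to exist before $T$, yielding a solution in $C^1([0,T];\Hm)$ satisfying the claimed bound. The main technical obstacle is the transport term: without the $\Linf$ control on $\gvele$ afforded by $m>5/2$, one cannot close the top-order commutator estimate and the argument collapses. This is precisely why the regularity hypothesis is sharpened from $m>3/2$ in Proposition~\ref{prop:mol} to $m>5/2$ here.
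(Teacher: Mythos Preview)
Your strategy is the same as the paper's: differentiate, pair with $D^\alpha\stre$, use self-adjointness of $\Je$ and incompressibility to reduce the transport contribution to a commutator, close via the Banach algebra property and the Calder\'on--Zygmund bound $\NormH{\gvele}{k}\le C\NormH{\stre}{k}$, and finish with an ODE comparison plus the continuation theorem. The overall argument is correct.

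There is, however, a misattribution of where the threshold $m>5/2$ actually bites. You claim it is needed to control $\NormLinf{\gvele}$, but this already follows from $m>3/2$: by \eqref{eq:edriss_num} one has $\NormH{\gvele}{m}\le C\NormH{\stre}{m}$, and Sobolev embedding $\Hm\hookrightarrow\Linf$ then gives $\NormLinf{\gvele}\le C\NormH{\stre}{m}$. The genuine bottleneck is the \emph{other} factor in the Kato--Ponce commutator: with $f=\vele$ and $g=\grad(\Je\stre)$, the estimate \eqref{eq:calc_ineq2} produces a term $\NormH{\vele}{m}\,\NormLinf{\grad(\Je\stre)}$, and it is $\NormLinf{\grad\stre}$ that requires $m-1>3/2$. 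This is exactly what the paper singles out. Your displayed bound $C\bigl(\NormLinf{\gvele}+\NormH{\gvele}{m}\bigr)\NormH{\stre}{m}^2$ hides this term; once you write the commutator out honestly the $\NormLinf{\grad(\Je\stre)}$ factor will appear, and your explanation of the role of $m>5/2$ should be adjusted accordingly.
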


\paragraph{Subsection~\ref{subsec:cauchy}}
We show that the family of mollified solutions, $\stre$, forms a Cauchy sequence in $C([0,T],\Ltwo)$, hence strongly converges to a function $\str\in C([0,T],\Ltwo)$.

\begin{prop}
\label{prop:cauchy}
Let $\str_0\in \Hm$ for $m> 5/2$. Then
the family of mollified solutions $\stre\in C^1([0,T],\Hm)$ forms, as $\e\to0$, a Cauchy sequence in $C([0,T],L^2(\R^3))$, hence converges to a function, which we denote by $\str$. Moreover, for every $0\le t\le T$, we have $\str(\cdot,t)\in\Hm$, and $\str$ satisfies the same bound
\[
\sup_{0\le t\le T}  \NormH{\str}{m} \le K(\NormH{\str_0}{m},T),
\]
as the family of mollified solutions.
\end{prop}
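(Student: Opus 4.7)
The plan is to follow the Majda--Bertozzi strategy: derive an $L^2$ energy inequality for the difference $\stre - \stret$, apply Gronwall to obtain the Cauchy property with an error controlled by the mollifier-approximation rate, and then use the uniform $\Hm$ bound from Proposition~\ref{prop:common} plus weak compactness to promote the $L^2$ limit to an $\Hm$-valued function satisfying the same bound.

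For the energy estimate, fix $\e,\e'>0$, subtract the two mollified equations \eqref{eq:mol_intro}, and pair in $\Ltwo$ with $\stre - \stret$ to get
\[
\smallhalf\deriv{}{t}\NormH{\stre-\stret}{0}^2 = \big(\Fe(\stre) - F_{\e'}(\stret),\,\stre - \stret\big)_0.
\]
Each product in $\Fe(\stre) - F_{\e'}(\stret)$ is telescoped into two kinds of pieces: \emph{Lipschitz pieces}, in which only the stress argument differs while the mollification parameter is held equal (for the transport term these look like $\Je[\vele\cdot\grad(\Je(\stre-\stret))]$ and $\Je[(\vele-\velet)\cdot\grad(\Je\stret)]$, with $\vele-\velet$ linear in $\stre-\stret$ via the Stokes kernel \eqref{eq:solve_stokes}); and \emph{mollifier-error pieces}, in which some $\Je$ is replaced by $\Jet$ on the same tensor (e.g.\ $(\Je-\Jet)[\velet\cdot\grad(\Je\stret)]$ and $\Jet[\velet\cdot\grad((\Je-\Jet)\stret)]$). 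The Lipschitz pieces, using the Sobolev embedding $\Hm\hookrightarrow\Linf$ (valid since $m>5/2$) together with the $\Ltwo$-boundedness of the singular integral operator \eqref{eq:gvel} linking $\str$ to $\gvel$, are paired with $\stre-\stret$ and bounded by $C_1(K)\,\NormH{\stre-\stret}{0}^2$, where $K=K(\NormH{\str_0}{m},T)$ is the a priori bound. The mollifier-error pieces are bounded, via the standard estimate $\NormH{(\Je-I)g}{s}\le C\e\NormH{g}{s+1}$ applied to fields that Proposition~\ref{prop:common} controls uniformly in $\Hm$, by $C_2(K)\,\delta(\e,\e')\,\NormH{\stre-\stret}{0}$ with $\delta(\e,\e')=\max(\e,\e')$. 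Young's inequality and Gronwall, combined with $\stre(\cdot,0)=\stret(\cdot,0)=\str_0$, give
\[
\sup_{0\le t\le T}\NormH{\stre(\cdot,t)-\stret(\cdot,t)}{0}\le C(K,T)\,\delta(\e,\e')\longrightarrow 0
\]
as $\e,\e'\to 0$, proving the Cauchy property. Denote the limit by $\str\in C([0,T];\Ltwo)$.

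For each $t\in[0,T]$, the sequence $\{\stre(\cdot,t)\}$ is bounded in $\Hm$ by Proposition~\ref{prop:common} and converges to $\str(\cdot,t)$ in $\Ltwo$. Banach--Alaoglu and uniqueness of the $L^2$ limit force the full sequence $\stre(\cdot,t)\rightharpoonup\str(\cdot,t)$ weakly in $\Hm$, and weak lower semi-continuity of the norm yields $\NormH{\str(\cdot,t)}{m}\le K(\NormH{\str_0}{m},T)$ uniformly in $t$. The interpolation inequality $\NormH{f}{m'}\le\NormH{f}{0}^{1-m'/m}\NormH{f}{m}^{m'/m}$ additionally upgrades the $L^2$-convergence to strong convergence in $C([0,T];H^{m'}(\R^3))$ for every $m'<m$, which is convenient for later limit passages in the equation.

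The main obstacle is the bookkeeping of the telescoping in the energy estimate: every one of the several products inside $\Fe(\stre)-F_{\e'}(\stret)$, and especially the doubly-mollified transport term $\Je[\vele\cdot\grad(\Je\stre)]$, must be split so that each piece is either Lipschitz-controlled by $\NormH{\stre-\stret}{0}$ or carries a factor $\e$ or $\e'$ times a constant depending only on the a priori bound $K$. The structural fact that makes this work is the $\Ltwo$-boundedness of the singular integral operator \eqref{eq:gvel}, which transfers uniform $\Hm$ control of $\stre$ to uniform control of $\gvele$ with constants independent of $\e$.
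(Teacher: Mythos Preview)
Your approach is essentially the one in the paper: an $L^2$ energy estimate for $\stre-\stret$ with the transport term telescoped into pieces carrying either a factor of $\NormH{\stre-\stret}{0}$ or a factor of $\max(\e,\e')$, followed by Gronwall and then Banach--Alaoglu plus weak lower semicontinuity for the $\Hm$ bound on the limit. Your telescoping of the advection term is a reordering of the paper's but adds up to the same thing.

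One point you should make explicit: among your ``Lipschitz pieces'' is $\Je[\vele\cdot\grad(\Je(\stre-\stret))]$, whose pairing with $\stre-\stret$ is \emph{not} controlled by the Sobolev embedding and the CZ inequality you cite---the gradient falls on the difference and would cost an $\e^{-1}$. What actually happens is that, using the symmetry of $\Je$,
\[
\big(\Je[\vele\cdot\grad(\Je(\stre-\stret))],\,\stre-\stret\big)
= \big(\vele\cdot\grad(\Je(\stre-\stret)),\,\Je(\stre-\stret)\big)=0
\]
by incompressibility of $\vele$. The paper arranges its telescoping so that the analogous term (with $\Jet$ and $\velet$) is isolated and declared to vanish for this reason; you should say the same. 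With that cancellation stated, the rest of your sketch goes through exactly as written.
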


\paragraph{Subsection~\ref{subsec:highnorms}}
Using the technique of interpolation we show that $\stre$ strongly converges to $\str$ in all intermediate norms $C([0,T],H^{m'}(\R^3))$, $0<m'<m$. We then proceed to prove continuity of the limit in the highest norm, $\str\in C([0,T],\Hm)$.

\begin{prop}
\label{prop:highnorms}
Let $\str_0\in \Hm$ for $m>5/2$. Then the $C([0,T],H^{m'}(\R^3))$ limit $\str$ of $\stre$, for every $m'\in(0,m)$ is continuous in $\str\in C([0,T],\Hm)$.
\end{prop}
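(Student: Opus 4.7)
The plan is to prove the proposition in two stages: first upgrading the $C([0,T],\Ltwo)$ convergence supplied by Proposition~\ref{prop:cauchy} to $C([0,T],\Hmp)$ for every $0<m'<m$ by a Sobolev interpolation argument, and then establishing that the limit $\str$ itself lies in $C([0,T],\Hm)$ by combining weak continuity with continuity of the $\Hm$-norm.

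For the first stage I would apply the standard Sobolev interpolation inequality pointwise in time to the difference of two mollified solutions,
\[
\NormH{\stre(\cdot,t)-\stret(\cdot,t)}{m'} \le C\, \NormH{\stre(\cdot,t)-\stret(\cdot,t)}{0}^{1-m'/m}\, \NormH{\stre(\cdot,t)-\stret(\cdot,t)}{m}^{m'/m}.
\]
The uniform bound from Proposition~\ref{prop:common} controls the second factor by $2K$ independently of $\e,\e'$, while the Cauchy property in $\Ltwo$ from Proposition~\ref{prop:cauchy} drives the first factor to zero uniformly in $t\in[0,T]$. Thus $\{\stre\}$ is Cauchy in $C([0,T],\Hmp)$, its limit agrees with $\str$, and $\str\in C([0,T],\Hmp)$ for every $m'\in(0,m)$.

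For the second stage I would first note that $\str(\cdot,t)\in\Hm$ with $\NormH{\str(\cdot,t)}{m}\le K$ for every $t$, by weak lower semicontinuity applied to the $\Hm$-bounded family $\stre(\cdot,t)$. Weak continuity of $t\mapsto\str(\cdot,t)$ in $\Hm$ then follows from a subsequential compactness argument: for any $t_n\to t$ the sequence is bounded in $\Hm$, and any weakly convergent subsequence must have the same limit as the (strong) $\Hmp$-limit, namely $\str(\cdot,t)$. Since in a Hilbert space weak convergence together with convergence of norms implies strong convergence, it suffices to show that $t\mapsto\NormH{\str(\cdot,t)}{m}$ is continuous. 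I would establish right-continuity at $t=0$ by passing to the $\e\to 0$ limit in the energy inequality $\deriv{}{t}\NormH{\stre}{m}^2 \le C(\NormH{\stre}{m})\NormH{\stre}{m}^2$ derived in Section~\ref{subsec:common}: Gronwall gives a continuous upper envelope $g(t)$ with $g(0)=\NormH{\str_0}{m}$ valid also for $\NormH{\str(\cdot,t)}{m}$, and weak lower semicontinuity at $t=0$ gives the matching lower bound, forcing $\lim_{t\to 0^+}\NormH{\str(\cdot,t)}{m}=\NormH{\str_0}{m}$. Right-continuity at an interior $t_0$ follows by restarting the problem with initial data $\str(\cdot,t_0)\in\Hm$ and invoking Propositions~\ref{prop:common}--\ref{prop:cauchy}; left-continuity is obtained from the time-reversibility noted after Theorem~\ref{th:local}.

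The main obstacle is the norm-continuity step. Because $\str$ is only the limit of $\stre$ and is not known a priori to satisfy a classical differential inequality for $\NormH{\str}{m}^2$, the upper bound must be extracted from the mollified estimate via a weak lower semicontinuity/Fatou argument in the variational characterization of the $\Hm$-norm, while the matching lower bound must exploit the weak continuity just established. It is this pinning of the norm to its initial value, together with weak continuity, that converts everything into strong continuity in the top Sobolev norm.
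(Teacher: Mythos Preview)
Your proposal is correct and follows essentially the same route as the paper: interpolation to upgrade convergence to all intermediate norms, weak continuity in $\Hm$, continuity of $t\mapsto\NormH{\str(\cdot,t)}{m}$ via the Gronwall upper envelope plus weak lower semicontinuity, and then the ``weak $+$ norm $\Rightarrow$ strong'' principle, with time-reversal for left-continuity.

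One step deserves more care than you give it. When you restart at an interior time $t_0$ with initial data $\str(\cdot,t_0)$ and invoke Propositions~\ref{prop:common}--\ref{prop:cauchy}, you obtain a \emph{new} family $\tilde{\str}_\e$ and a new $L^2$-limit $\tilde{\str}$ that is right-continuous in $\Hm$ at $t_0$; but you still need to show $\tilde{\str}=\str$ on a neighborhood of $t_0$. Uniqueness of solutions to \eqref{eq:closed_str} is not yet available at this stage of the argument (it only comes in Subsection~\ref{subsec:ode}), so the identification is not automatic. The paper closes this gap by running an $L^2$ energy comparison between the original family $\stre$ and the restarted family $\tilde{\str}_\e$, exactly parallel to the estimate leading to \eqref{eq:before3.18}, and using that $\stre(\cdot,t_0)\to\str(\cdot,t_0)=\tilde{\str}_\e(\cdot,t_0)$ to force $\NormH{\str-\tilde{\str}}{0}=0$. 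You should add this identification step explicitly.
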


\paragraph{Subsection~\ref{subsec:ode}}
We finally show that $\str$, the limit of $\stre$, is a solution of \eqref{eq:closed_str} in the space
\[
\str\in C([0,T],\Hm)\cap C^1([0,T],H^{m-1}(\R^3)).
\]

\subsection{Local-in-time existence of mollified solutions}
\label{subsec:mol}
In this subsection we prove Proposition~\ref{prop:mol}. We
approximate the Oldroyd-B system \eqref{eq:the_model}, or
equivalently the Hilbert space-valued ODE (evolution equation) \eqref{eq:closed_str} by a
mollified equation for a smooth approximation $\stre$ of $\str$,
\begin{equation}
\deriv{\stre}{t} = \Fe(\stre), \qquad \stre(\cdot,0) = \str_0,
\label{eq:mol}
\end{equation}
where
\begin{equation}
\begin{split}
\Fe(\stre) &= -\Je [\vele\cdot\grad(\Je\stre)] + (\gvele)^T\stre + \stre(\gvele) \\
&-\frac{1}{\lambda}\stre +\frac{\nu_p}{\lambda} \Brk{\grad\vele +(\grad \vele )^T},
\end{split}
\label{eq:Fe}
\end{equation}
$\vele$ is given by \eqref{eq:solve_stokes} with $\str$ replaced by $\stre$, and
the mollification operator $\Je$ is defined by \eqref{eq:Je} in the
appendix. Comparing with \eqref{eq:F}, we note that mollification is
only used in the advection term. As will be shown,  the gradient of
$\vele$ has the same degree of regularity, with respect to the
$\Hm$-norms, as $\stre$, hence no additional mollification is
needed.

To prove that \eqref{eq:mol} has a local-in-time solution we
use Picard's theorem over Banach spaces. Specifically,
we work within the Banach space $\Hm$ with $m>3/2$.
Picard's theorem for functional evolution differential equations
states that if there exists an open subset $O\subset \Hm$ such that
\begin{enumerate}
\item $\Fe:O \to \Hm$.
\item $\Fe$ is locally Lipschitz continuous, i.e., for any $\str\in O$ there exists
 an open neighborhood of $\str$, $U\subset O$,  and a constant $L>0$ such that for every $\btau_1,\btau_2\in
U$
\[
\NormH{\Fe(\btau_1) - \Fe(\btau_2)}{m} \le L \NormH{\btau_1 - \btau_2}{m}
\]
\end{enumerate}
then there exists, for every $\str_0\in O$, a time $T_\e>0$ and a unique solution $\stre\in C^1([0,T_\e);O)$ of \eqref{eq:mol}.

Two properties that are being used extensively throughout this
section are: (i) the Calder\'on-Zygmund (CZ) inequality
\eqref{eq:CZ2} (see Appendix), from which it follows that $\stre\in\Hm$ implies
that the velocity gradient $\gvele$, and a-fortiori the velocity
$\vele$ itself, are in $\Hm$ as well,
\begin{equation}
\NormH{\vele}{m} \le \frac{c_m}{\nu_s} \NormH{\stre}{m}\,\,
,\qquad \NormH{\gvele}{m} \le \frac{\tilde{c}_m}{\nu_s} \NormH{\stre}{m}.
\label{eq:edriss_num}
\end{equation}
(ii) For $m>3/2$, $\Hm$ forms a Banach algebra, i.e.,
\[
\NormH{f g}{m} \le c \NormH{f}{m} \NormH{g}{m}.
\]

Combining these two properties with the smoothing properties
\eqref{eq:J2}--\eqref{eq:J3} of $\Je$ (see Appendix), it follows at once that $\Fe$
is a mapping $\Hm\to\Hm$. We set
\[
O = \BRK{\str\in\Hm:\,\,\NormH{\str}{m} < r},
\]
where $r$ is sufficiently large such that $\str_0\in O$. It remains
to show that there exists a positive constant $L = L(r)$, such that
$\Fe$ is Lipschitz continuous in $O$ with constant $L$.

To avoid lengthy expressions, we
split $\Fe$ into a sum of four terms,
\[
\Fe = \F_1 + \F_2 + \F_3 + \F_4,
\]
where
\[
\begin{aligned}
&\F_1(\stre) &=& - \Je[ \vele \cdot \grad (\Je \stre)]
\qquad
&\F_2(\stre) &=& (\gvele)^T \stre + \stre(\gvele) \\
&\F_3(\stre) &=& -\frac{1}{\lambda} \stre
\qquad
&\F_4(\stre) &=& \frac{\nu_p}{\lambda}\Brk{\gvele + (\gvele)^T},
\end{aligned}
\]
and show that each of these four terms is Lipschitz continuous.
That is,
let $\btau_1,\btau_2\in O$ and let $\vel_1,\vel_2$ be their corresponding velocity fields satisfying the Stokes system \eqref{eq:solve_stokes}; we show that each of the $\F_j$ verifies a bound of the type
\[
\NormH{\F_j(\btau_1)-\F_j(\btau_2)}{m} \le L(\e,\NormH{\btau_1}{m},\NormH{\btau_2}{m})\NormH{\btau_1-\btau_2}{m},
\]
where $L$ is a monotonic function of its last two arguments, and hence it is bounded by $L(\e,r,r)$.

The Lipschitz continuity of the linear function $\F_3$ is trivial. The Lipschitz continuity of $\F_4$ follows from the CZ inequality \eqref{eq:CZ2},  which implies
\[
\NormH{\gvel_2 - \gvel_1}{m} \le \frac{C}{\nu_s}\, \NormH{\btau_2 - \btau_1}{m}.
\]

For the advection term $\F_1$  we have
\[
\begin{split}
\NormH{\F_1(\btau_2) - \F_1(\btau_1)}{m} &=
\NormH{\Je[ \vel_2 \cdot \grad (\Je \btau_2)] - \Je[ \vel_1 \cdot \grad (\Je \btau_1)]}{m} \\
&\le C\,\NormH{\vel_2 \cdot \grad (\Je \btau_2) - \vel_1 \cdot \grad (\Je \btau_1)}{m} \\
&\le C\,\NormH{\vel_2 \cdot \grad (\Je (\btau_2-\btau_1))}{m} + C\,\NormH{(\vel_2-\vel_1)\cdot \grad (\Je \btau_1)}{m},
\end{split}
\]
where in the passage from the first to the second line we used \eqref{eq:J2} with $k=0$, and in the passage from the second to the third line we added and subtracted equal terms and used the triangle inequality. Using the Sobolev calculus inequality \eqref{eq:calc_ineq},
\[
\begin{split}
\NormH{\F_1(\btau_2) - \F_1(\btau_1)}{m}  &\le C
\Big[
\Norm{\vel_2}_{L^\infty} \NormH{\grad (\Je (\btau_2-\btau_1)) }{m} \\
&+
\Norm{\grad (\Je (\btau_2-\btau_1))}_{L^\infty} \NormH{\vel_2}{m}
+
\Norm{\vel_2-\vel_1}_{L^\infty} \NormH{\grad (\Je \btau_1) }{m} \\
&+
\Norm{\grad (\Je \btau_1)}_{L^\infty} \NormH{\vel_2-\vel_1}{m}
\Big].
\end{split}
\]
Using then property \eqref{eq:J2} of the mollification operator and the Sobolev embedding \eqref{eq:SobEmb}, we obtain:
\[
\NormH{\F_1(\btau_2) - \F_1(\btau_1)}{m}  \le \frac{C}{\e} \brk{
\Norm{\vel_2}_{m} \NormH{\btau_2-\btau_1 }{m} +
 \NormH{\btau_1}{m}\NormH{\vel_2-\vel_1}{m}}.
\]
Finally, using CZ inequality,
\[
\NormH{\F_1(\btau_2) - \F_1(\btau_1)}{m}  \le C(\e,\NormH{\btau_1}{m},\NormH{\btau_2}{m}) \NormH{\btau_2-\btau_1}{m}.
\]
Remains the deformation term $\F_2$. Using the Banach algebra property \eqref{eq:BanachAlgebra},
\[
\begin{split}
\NormH{\F_2(\btau_2) - \F_2(\btau_1)}{m}  & =
\NormH{( \gvel_1)^T \btau_1 +  \btau_1 (\gvel_1) -
 (\gvel_2)^T \btau_2 -  \btau_2 (\gvel_2)}{m} \\
&\le 2\,\NormH{\btau_1 (\gvel_1) -  \btau_2 (\gvel_2)}{m} \\
&\le 2\NormH{\gvel_1}{m}\NormH{\btau_2-\btau_1}{m} + 2\NormH{\btau_2}{m}\NormH{\gvel_2-\gvel_1}{m}.
\end{split}
\]
One more application of the CZ inequality gives,
\[
\NormH{\F_2(\btau_2) - \F_2(\btau_1)}{m}   \le C\brk{
\NormH{\btau_1}{m}+\NormH{\btau_2}{m}}\NormH{\btau_2-\btau_1}{m}.
\]

This shows that  $\Fe$ is locally Lipschitz in $O$, hence local existence for the mollified equation \eqref{eq:mol}.

\subsection{Energy estimates for the mollified solutions}
\label{subsec:common}
In this subsection we prove Proposition~\ref{prop:common}, i.e., that the mollified solutions can be continued uniformly up to a time $T>0$ that does not depend on $\e$.
To do so,  we first obtain an a-priori estimate, whereby
if the solution $\stre$ exists up to time $T$, then
\[
\sup_{0\le t\le T} \NormH{\stre(\cdot,t)}{m} \le
C(T,\NormH{\str_0}{m}) \equiv K,
\]
where all the $\stre$ have the same initial condition. The existence of
the solution up to that time follows from the continuation theorem
for autonomous ODEs in Banach spaces. Taking for domain
\[
O = \BRK{\str\in\Hm:\,\,\NormH{\str}{m} \le K},
\]
the solution either exists up to time $T$, or leaves the set $O$ before that time, which we  would have ruled out by the above estimate.

To obtain an a-priori uniform bound on $\NormH{\stre(\cdot,t)}{m}$, we use
an energy estimate. Starting from the mollified equation
\eqref{eq:mol}, we take its $\alpha$-th derivative ($|\alpha|\le
m$), and then an inner product with $D^\alpha\stre$. This yields an
``energy"  equation,
\begin{equation}
\begin{split}
\half \deriv{}{t} \NormH{D^\alpha \stre}{0}^2 &+ \frac{1}{\lambda} \NormH{D^\alpha \stre}{0}^2 = -\brk{D^\alpha \stre, D^\alpha \Je[ \vele \cdot \grad (\Je \stre)]} \\
&+ 2 \brk{D^\alpha \stre, D^\alpha [\stre (\gvele)]}
+ \frac{2\nu_p}{\lambda} \brk{D^\alpha \stre, D^\alpha (\gvele)} \\
& \equiv I_1 + I_2 + I_3,
\end{split}
\label{eq:energy}
\end{equation}
where we used the symmetry of $\stre$ in the last two terms.

We observe that $\stre$ is as smooth as the initial data $\str_0\in\Hm$, and $\Je\stre\in C^\infty(\R^3)$, therefore the above estimates should be interpreted in the strong sense. In particular, the time derivative is classical and we do not require additional justification.

Since we need a bound that does not depend on $\e$, we cannot use the smoothing properties of $\Je$. On the other hand, we are not concerned by finite-time blow up as long as the time horizon is independent of $\e$.

We start with the advection term. Using the fact that $\Je$ commutes
with weak derivatives and is symmetric, we write
\[
-I_1 =  \brk{D^\alpha (\Je \stre), D^\alpha[ \vele \cdot \grad
(\Je \stre)]}.
\]
We then add and subtract from the second argument of the inner product
\[
\vele \cdot D^\alpha \grad(\Je\stre),
\]
whose inner product with $D^\alpha (\Je \stre)$ vanishes due to the
incompressibility of the flow, i.e., due to $\vele$ being
divergence-free. Using the Cauchy-Schwarz   inequality and
\eqref{eq:J2} for $k=0$ we obtain
\[
I_1 \le C \NormH{D^\alpha  \stre}{0}
\NormH{D^\alpha[ \vele \cdot \grad (\Je \stre)] - \vele \cdot  D^\alpha\grad(\Je\stre)}{0}.
\]
We then invoke the Sobolev calculus inequality \eqref{eq:calc_ineq2} to get
\[
I_1 \le C \NormH{D^\alpha  \stre}{0}
\Brk{\NormLinf{\grad\vele} \NormH{\grad(\Je\stre)}{m-1} +
\NormH{\vele}{m} \NormLinf{\grad(\Je\stre)}}.
\]
Using repeatedly the property  \eqref{eq:J2} of $\Je$, the Sobolev embedding theorem \eqref{eq:SobEmb}, and the CZ inequality, we get the bound
\begin{equation}
I_1 \le \frac{C}{\nu_s}  \NormH{\stre}{m}^3.
\label{eq:I1est}
\end{equation}
Note that in order to bound $\NormLinf{\grad\stre}$ by $\NormH{\stre}{m}$ we need $m>5/2$.

We turn to $I_2$, where we use the Cauchy-Schwarz inequality, the Banach algebra property of $\Hk$ for $k>3/2$ and the CZ inequality,
\begin{equation}
\begin{split}
I_2 &= 2 \brk{D^\alpha \stre, D^\alpha [\stre (\gvele)]}
\le 2 \NormH{\stre}{m} \NormH{\stre (\gvele)}{m} \\
&\le C \, \NormH{\stre}{m}^2 \NormH{\gvele}{m}
\le \frac{C}{\nu_s} \NormH{\stre}{m}^3.
\end{split}
\label{eq:I2est}
\end{equation}
Remains $I_3$, which we estimate using  the Cauchy-Schwarz inequality followed by the CZ inequality,
\begin{equation}
I_3 = \frac{2\nu_p}{\lambda}  \brk{D^\alpha \stre, D^\alpha \gvele} \le
C\frac{\nu_p}{\lambda}\,\NormH{D^\alpha \stre}{0} \NormH{D^\alpha \gvele}{0} \le
C \frac{\nu_p}{\lambda \nu_s}\NormH{\stre}{m}^2.
\label{eq:I3est}
\end{equation}

Combining the three estimates \eqref{eq:I1est}, \eqref{eq:I2est} and
\eqref{eq:I3est} we obtain an energy inequality
\[
\deriv{}{t} \NormH{\stre}{m} + \frac{1}{\lambda} \NormH{\stre}{m} \le
c_1 \frac{\nu_p}{\lambda\nu_s} \NormH{\stre}{m} + \frac{c_2}{\nu_s} \NormH{\stre}{m}^2,
\]
from which we conclude the existence of a time $T = T(\NormH{\str_0}{m})$, independent of $\e$, for which all the $\stre$ exist and have a common bound in $\Hm$,
\begin{equation}
\begin{split}
\sup_{0\le t\le T}\NormH{\stre}{m} &\le
\frac{c_3 e^{c_3 T} \NormH{\str_0}{m}}{c_3 + c_4(1-e^{c_3 T})\NormH{\str_0}{m}}
 \equiv K(\NormH{\str_0}{m},T),
\end{split}
\label{eq:thebound}
\end{equation}
where $c_3 = \lambda^{-1}(c_1 \nu_p/\nu_s-1)$, $c_4 = c_2/\nu_s$
and $T < \frac{1}{|c_3|} \log(1 + |c_3|/c_4 \NormH{\str_0}{m})$.
This concludes the proof of Proposition~\ref{prop:common}.


{\bfseries Remark:} Substituting back the dimensional parameters, our expression for the uniform existence time is
\[
T < \frac{\lambda}{|c_1 \nu_p/\nu_s-1|} \log\brk{1 + \frac{\nu_s|c_1 \nu_p/\nu_s-1|}{c_2\lambda \NormH{\str_0}{m}}}.
\]
First, it follows that this time remains finite in the $\lambda\to\infty$ limit (i.e., infinite Weissenberg number), in which case $T<\nu_s/c_2  \NormH{\str_0}{m}$. Second, this time is unbounded (hence, global-in-time existence follows) if $\nu_s/\nu_p>c_2$ and the initial data are sufficiently small, namely $\NormH{\str_0}{m} < |c_3|/c_4$.

\subsection{Convergence  of $\stre$ in $C([0,T],\Ltwo)$}
\label{subsec:cauchy}

We proceed to prove Proposition~\ref{prop:cauchy}, whereby  the sequence $\stre$ forms, as $\e\to0$,  a Cauchy sequence in the space $C([0,T];L^2(\R^3))$. Here, $T$ is the uniform existence time established in the previous subsection.
Specifically, we show that for $\stre,\stret\in \Hm$ solutions of \eqref{eq:mol} with the same initial condition $\str_0$, the following holds:
\[
\sup_{0\le t\le T}\NormH{\stre-\stret}{0}\le C(\NormH{\str_0}{m}\,,T)\max(\e,\e').
\]
Hence follows the existence of $\str\in C([0,T],\Ltwo)$, such that
\[
\lim_{\e\to0}\,\,\sup_{0\le t\le T} \NormH{\stre(\cdot,t) - \str(\cdot,t)}{0} = 0.
\]

As in the previous subsection, we start with an energy equation, this time for the difference $\stre-\stret$:
\begin{equation}
\begin{split}
\half\deriv{}{t}\NormH{\stre -\stret}{0}^2 &+ \frac{1}{\lambda}\NormH{\stre -\stret}{0}^2 =
\\ &-\brk{\Je [\vele\cdot\grad(\Je\stre)] - \Jet [\velet\cdot\grad(\Jet\stret)],\stre-\stret} \\
&+ 2\brk{\stre\gvele  - \stret\gvelet, \stre-\stret} \\
&+ \frac{2\nu_p}{\lambda}\brk{\gvele-\gvelet,\stre-\stret} \equiv I_1 + I_2 + I_3.
\end{split}
\label{eq:en2}
\end{equation}

$I_2$ and $I_3$ are easily estimated by the same manipulations as in the previous subsection,
\begin{equation}
\begin{split}
I_2 &\le
2\NormH{\stre\gvele  - \stret\gvelet}{0} \NormH{\stre-\stret}{0} \\
&= 2\NormH{(\stre-\stret)\gvele  - \stret(\gvelet-\gvele)}{0} \NormH{\stre-\stret}{0} \\
&\le 2\Brk{\NormH{\stre-\stret}{0} \NormLinf{\gvele} + \NormLinf{\stret} \NormH{\gvelet-\gvele}{0} }\NormH{\stre-\stret}{0} \\
& \le C \brk{\NormLinf{\gvele} +\NormLinf{\stret}}\NormH{\stre-\stret}{0}^2\\
&\le C
(\NormH{\stre}{m}+\NormH{\stret}{m})\NormH{\stre-\stret}{0}^2,
\end{split}
\label{eq:I2}
\end{equation}
and similarly,
\begin{equation}
I_3  \le
 \frac{2\nu_p}{\lambda} \NormH{\gvele-\gvelet}{0} \NormH{\stre -\stret}{0}\le C \NormH{\stre -\stret}{0}^2.
\label{eq:I3}
\end{equation}
Remains the advection term $I_1$, which we first split as follows,
\[
\begin{split}
-I_1 &=
\brk{(\Je-\Jet)[\vele\cdot\grad(\Je\stre)],\stre-\stret
}
+ \brk{\Jet[(\vele-\velet)\cdot\grad(\Je\stre)],\stre-\stret } \\
&+ \brk{\Jet[\velet\cdot\grad(\Je-\Jet)\stre],\stre-\stret }
+ \brk{\Jet[\vele\cdot\grad\Jet(\stre-\stret)],\stre-\stret }.
\end{split}
\]
The last term vanishes because $\vele$ is divergence-free. For the first three terms we use the Cauchy-Schwarz inequality, obtaining thus
\begin{equation}
\begin{split}
\frac{|I_1|}{\NormH{\stre-\stret}{0}} &\le
\NormH{(\Je-\Jet)[\vele\cdot\grad(\Je\stre)]}{0} +
\NormH{\Jet[(\vele-\velet)\cdot\grad\Je\stre]}{0} \\
&+ \NormH{\Jet[\velet\cdot\grad(\Je-\Jet)\stre]}{0}
\equiv A_1 + A_2 + A_3.
\end{split}
\label{eq:I1}
\end{equation}

By \eqref{eq:J2} the outer $\Jet$ can be replaced in $A_2,A_3$
by a constant prefactor. $A_2$ is easily estimated by
\begin{equation}
A_2 \le C \NormH{\vele-\velet}{0} \NormLinf{\grad(\Je\stre)} \le
C \NormH{\stre-\stret}{0} \NormH{\stre}{m}\,\,,
\label{eq:A3}
\end{equation}
where we have used the CZ inequality and the Sobolev embedding theorem, with $m>5/2$ (see Appendix).

Note that the two remaining terms have the factor $(\Je-\Jet)$, which is ``small" in the following sense. By \eqref{eq:J1} follows that
\[
\NormH{(\Je-\Jet)f}{0} \le \NormH{(\Je-I)f}{0} + \NormH{(\Jet-I)f}{0} \le C
\NormH{f}{1}\, \max(\e,\e').
\]
Thus, $A_1$ can be estimated by
\begin{equation}
A_1 \le C \NormH{\vele\cdot\grad(\Je\stre)}{1}\, \max(\e,\e')
 \le C \NormH{\stre}{m}^2\, \max(\e,\e'),
\label{eq:A1}
\end{equation}
where the last inequality follows from the very rough estimate of
the $H^1$-norm by the $H^{m-1}$ norm, and the
Banach algebra property of $\Hk$ for $k>3/2$.

$A_3$ verifies an estimate similar to $A_1$. Gathering the expressions for $A_1$, $A_2$, $A_3$, $I_2$, $I_3$, and substituting into the energy equation \eqref{eq:en2},
\begin{equation}
\half\deriv{}{t}\NormH{\stre -\stret}{0}^2 \le C\NormH{\stre}{m}^2
\max(\e,\e')\NormH{\stre-\stret}{0} +C
(\NormH{\stret}{m}+\NormH{\stre}{m})\NormH{\stre-\stret}{0}^2,
\label{eq:before3.18}
\end{equation}
We now use the uniform bound \eqref{eq:thebound}  to obtain
\[
\deriv{}{t}\NormH{\stre -\stret}{0} \le
C(\NormH{\str_0}{m})\Brk{ \max(\e,\e')+\NormH{\stre-\stret}{0}},
\]
which upon integrating yields
\begin{equation}
\sup_{0\le t \le T} \NormH{\stre-\stret}{0}\le e^{C(\NormH{\str_0}{m}) T}\max(\e,\e'),
\label{eq:cauchy}
\end{equation}
and we used here the fact that $\stre$ and $\stret$ satisfy the same initial conditions.
Therefore, $\stre$ is  a Cauchy sequence in the Banach space $C([0,T];\Ltwo)$ and hence it has a limit $\str\in C([0,T];\Ltwo)$.
In particular, \eqref{eq:cauchy} implies that for the
limit $\str$ we have
\[
\sup_{0\le t \le T} \NormH{\stre-\str}{0}\le e^{C(\NormH{\str_0}{m})T}\e.
\]

The uniform boundedness \eqref{eq:thebound} of the $\stre$ implies
by the Banach-Alaoglu theorem that for every $t\le T$ the sequence
$\stre(\cdot,t)$ has a subsequence that converges weakly in $\Hm$.
This limit must however coincide with the $L^2$ limit,
$\str(\cdot,t)$. Moreover, the Banach-Alaoglu theorem also implies
that
\[
\NormH{\str(\cdot,t)}{m} \le \liminf_{\e\to0} \NormH{\stre(\cdot,t)}{m}\,\,,
\]
thus  it follows from \eqref{eq:thebound} that for every $t\in[0, T]$,
\begin{equation}
\sup_{0\le t\le T} \NormH{\str(\cdot,t)}{m} \le K(\NormH{\str_0}{m},T).
\label{eq:str_bound}
\end{equation}
Note, however that we do not yet know that $\str$ is a continuous function from $[0,T]$  into $\Hm$. It is the task of the next subsection to show that $\str$ is in $C([0,T],\Hm)$.

\subsection{Continuity in $\Hm$}
\label{subsec:highnorms}
In this section we prove Proposition~\ref{prop:highnorms}, whereby $\str\in C([0,T],\Hm)$. We do it in several steps.

We start by showing that the mollified solutions $\stre$ converge to $\str$ in all ``intermediate" norms, $C([0,T],H^{m'}(\R^3))$, for all $0<m'<m$. For that we invoke the following interpolation lemma in Sobolev space:
\[
\NormH{\btau}{m'}\le C_m \NormH{\btau}{0}^{1-m'/m}\NormH{\btau}{m}^{m'/m},
\]
valid for all $\btau\in\Hm$ and $0\le m'\le m$.
Substituting $\btau=\stre-\str$,
\[
\NormH{\stre-\str}{m'}\le C_m \NormH{\stre-\str}{0}^{1-m'/m}\NormH{\stre-\str}{m}^{m'/m}.
\]
Using the uniform boundedness \eqref{eq:thebound} of $\stre$ and \eqref{eq:str_bound},
\[
\NormH{\stre-\str}{m}^{m'/m} \le \brk{\NormH{\stre}{m} +
\NormH{\str}{m}}^{m'/m} \le
[2K(\NormH{\str_0}{m},T)]^{m'/m},
\]
thus we obtain
\[
\sup_{0\le t\le T} \NormH{\stre(\cdot,t)-\str(\cdot,t)}{m'}\le
[2K(\NormH{\str_0}{m},T)]^{m'/m}\, \sup_{0\le t\le T}
\NormH{\stre(\cdot,t)-\str(\cdot,t)}{0}^{1-m'/m},
\]
i.e., uniform convergence $\stre\to\str$ in all intermediate norms.

To show that $\str$ is time-continuous in the highest norm, we first show that
$\str$ is time-continuous in the weak topology of $\Hm$. That is, we
claim that for every $\phi\in H^{-m}(\R^3)$,
\[
g(t) = \Average{\phi,\str(\cdot,t)}
\]
is continuous in time, where $\Average{\cdot,\cdot}$ is the dual pairing
between $\Hm$ and $H^{-m}(\R^3)$. Since $\str$ is time-continuous in
the strong topologies of all the intermediate norms, it follows that
$\Average{\phi,\str(\cdot,t)}$ is time-continuous for all $\phi\in
H^{-m'}(\R^3)$, but since the latter is dense in $H^{-m}(\R^3)$ and
$\str$ satisfies the uniform bound \eqref{eq:str_bound} in $H^m$,
the continuity of $g(t)$ follows.

As is well-known
continuity in the weak topology  of a Hilbert space supplemented by the continuity of the norm yields continuity in the strong topology. Thus, it remains to show that $\NormH{\str(\cdot,t)}{m}$ is time-continuous.

We start by showing continuity at the initial time $t=0$.
For $h>0$ we have
\[
\begin{split}
\NormH{\str(\cdot,h)-\str_0}{m}^2 &=
\NormH{\str(\cdot,h)}{m}^2 - \NormH{\str_0}{m}^2
- 2\brk{\str(\cdot,h)-\str_0, \str_0}_m
\end{split}
\]
where $(\cdot,\cdot)_m$ denotes the inner-product in $\Hm$. As
$h\to0^+$, the last term vanishes by the time-continuity  of $\str$
in the weak topology in $\Hm$. This yields,
\begin{equation}
\NormH{\str_0}{m}\le \liminf_{h\searrow0}\NormH{\str(\cdot,h)}{m}.
\label{eq:cont_zero}
\end{equation}

To obtain the reverse inequality we observe that by following
similar steps as in the above proof on can obtain the following
modification of \eqref{eq:str_bound},
\[
\sup_{0\le t\le \tau} \NormH{\str(\cdot,t)}{m} \le K(\NormH{\str_0}{m},\tau),
\]
for all $\tau\in(0,T]$, where $K$ is given by \eqref{eq:str_bound}.
Therefore, by taking $\tau=h$ and assuming $h$ is small enough, the above inequality and \eqref{eq:str_bound}  yields,
\[
\NormH{\str(\cdot,h)}{m}\le \NormH{\str_0}{m} + C h
\,\NormH{\str_0}{m}\brk{ 1 + \NormH{\str_0}{m}}.
\]
Since $\NormH{\str(\cdot,h)}{m}$ is bounded we may let $h\to0^+$, obtaining
\[
\limsup_{h\searrow\, 0}\NormH{\str(\cdot,h)}{m}\le
\NormH{\str_0}{m},
\]
which together with \eqref{eq:cont_zero} implies right-continuity at $t=0$; left-continuity at $t=0$
follows from the fact that the
Oldroyd-B system can be time reversed (unlike parabolic equations
such as the viscous
Navier-Stokes equations).

It remains to show that $\str$ is time continuous at any arbitrary
time $s\in[0,T]$. We use the fact that $\str(\cdot,s)\in\Hm$ to
construct a new set of mollified solutions with initial data
at $t=s$, $\tilde{\str}_\e(\cdot,s)=\str(\cdot,s)$. By the same line of
reasoning as before, these solutions converge in $C([0,T],\Ltwo)$ to
a solution $\tilde{\str}$ which belongs, at all times, to $\Hm$ and
is continuous at the initial time $t=s$. Let $\stre(\cdot,t)$ be as
before. Now, one can follow the same steps as in
Section~\ref{subsec:cauchy} to show that
\begin{eqnarray*}
\half\deriv{}{t} \NormH{\stre - \tilde{\str}_\e}{0}^2 \le C
(\NormH{\tilde{\str}_\e}{m}+\NormH{\stre}{m})\NormH{\stre}{m} \e
\NormH{\stre - \tilde{\str}_\e}{0} \\
+ C (\NormH{\tilde{\str}_\e}{m} +\NormH{\stre}{m})\NormH{\stre -
\tilde{\str}_\e}{0}^2
\end{eqnarray*}
(cf. \eqref{eq:before3.18}). Similar steps lead that around $t=s$
\[
\NormH{\tilde{\str}_\e}{m} \le K(\NormH{\stre(\cdot,s)}{m}).
\]
 Therefore, we have
\[
\deriv{}{t} \NormH{\stre - \tilde{\str}_\e}{0} \le C(\NormH{\stre(\cdot,s)}{m})\,
\brk{\e + \NormH{\stre - \tilde{\str}_\e}{0}}.
\]
Integrating, we obtain
\[
\NormH{\stre(\cdot,t) - \tilde{\str}_\e(\cdot,t)}{0} \le
e^{C(\NormH{\stre(\cdot,s)}{m})(t-s)} \NormH{\stre(\cdot,s) - \tilde{\str}_\e(\cdot,s)}{0} +
e^{C(\NormH{\stre(\cdot,s)}{m})(t-s)}\,\e.
\]
Now, as we let $\e\to0$, we know that $\stre(\cdot,s)\to\str(\cdot,s)$, and we know that $\tilde{\str}_\e(\cdot,s) = \str(\cdot,s)$, thus we conclude
\[
\NormH{\str(\cdot,t) - \tilde{\str}(\cdot,t)}{0} = 0.
\]
Since $\tilde{\str}$ is continuous at $t=s$ so is $\str$.

\subsection{$\str$ is a solution of \eqref{eq:closed_str}}
\label{subsec:ode}
Having shown that $\str\in C([0,T],\Hm)$, it remains to show that $\str$ is indeed a solution of \eqref{eq:closed_str}, and that
\[
\str\in C([0,T],\Hm) \cap C^1([0,T],H^{m-1}(\R^3)).
\]

To show that, we refer once more to the mollified solutions, whose evolution satifies the integral equation,
\[
\stre(\cdot,t) = \str_0 + \int_0^t \Fe(\stre(\cdot,s))\,ds.
\]
We now exploit the convergence of $\stre$ to $\str$ in all the intermediate norms. Specifically, we set $5/2<m'<m$, and claim that
\[
\stre\to\str \qquad \text{ in $C([0,T],\Hmp)$}
\]
implies that
\[
\begin{aligned}
& \Je [\vele\cdot\grad(\Je\stre)] \to  \vel\cdot\grad\str \\
& \stre (\gvele) \to  \str (\gvel) \\
&  \grad\vele \to \gvel
\end{aligned}
\]
in $C([0,T],H^{m'-1}(\R^3))$. The last two identities follow from
the CZ inequality \eqref{eq:CZ2} and the Banach algebra property
of $\Hk$ for $k>3/2$.
The convergence of the advection term follows from the same considerations, up to the loss of one order of regularity due to the gradient of $\stre$. Thus,
\[
\str(\cdot,t) = \str_0 + \int_0^t \F(\str(\cdot,s))\,ds,
\]
which proves that $\str\in C^1([0,T],H^{m-1}(\R^3))$, and satisfies the differential equation  \eqref{eq:closed_str}.

{\bfseries Remark:} Based on the previous remark, this also proves global-in-time existence for small initial data.

\section{A Beale-Kato-Majda breakdown condition}
\label{sec:BKM}
Having proved the local-in-time existence of solutions
to the Oldroyd-B equation \eqref{eq:the_model}, or its ODE representation \eqref{eq:closed_str}, we turn to the main purpose of this paper, which is the characterization of the breakdown of such solutions at finite time. By the continuation theorem for autonomous ODEs, if $T^*<\infty$ and $[0,T^*)$ is the maximal time of existence of the solution $\str$, then
\begin{equation}
\limsup_{t\nearrow T^*} \NormH{\str}{m} = \infty.
\label{eq:div1}
\end{equation}
Such a breakdown criterion is not informative enough, as it roughly says that ``a solution exists as long as it exists".
Our main theorem below provides a more concise breakdown condition, which is only associated with the stress itself, and does not involve any of its derivatives:

\begin{thm}
Let $\str$ be a local-in-time solution to \eqref{eq:the_model} in the class
\[
C([0,T);\Hm)\cap C^1([0,T);H^{m-1}(\R^3)),
\]
with $m\ge 3$.
Suppose that $[0,T^*)$ is the maximal time of existence, with $T^*<\infty$, then
\begin{equation}
\lim_{t \nearrow T^*}
\int_0^{t} \NormLinf{\str(\cdot,s)} \,ds=\infty.
\label{eq:div2}
\end{equation}
\label{th:BKM}
\end{thm}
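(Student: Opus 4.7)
The plan is a Beale--Kato--Majda style contradiction argument. Suppose $T^*<\infty$ is the maximal existence time and, for contradiction, that $M:=\int_0^{T^*}\NormLinf{\str(\cdot,s)}\,ds$ is finite; I will derive a uniform a-priori bound on $\NormH{\str(\cdot,t)}{m}$ for $t\in[0,T^*)$, which contradicts \eqref{eq:div1} and hence the maximality of $T^*$.

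First I would obtain a uniform $L^2$ bound. Taking the Frobenius inner product of \eqref{eq:the_model} with $\str$ and integrating over $\R^3$: the advection term $\tfrac12\int\vel\cdot\grad|\str|^2$ vanishes by incompressibility; the stretching term is controlled pointwise by $|\str|^2|\gvel|$ and integrates to $\NormLinf{\str}\NormH{\str}{0}\NormH{\gvel}{0}\le C\NormLinf{\str}\NormH{\str}{0}^2$ via the Calder\'on--Zygmund inequality \eqref{eq:CZ2}; the linear forcing contributes at most $C\NormH{\str}{0}^2$. Gr\"onwall then yields $\NormH{\str(\cdot,t)}{0}\le C(M,\NormH{\str_0}{0})$ uniformly on $[0,T^*)$.

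Next I would carry out the $H^m$ energy estimate directly on $\str$ (no mollifier), paralleling Subsection~\ref{subsec:common}. The stretching and linear-forcing terms contribute $C\NormH{\str}{m}^2(1+\NormLinf{\str}+\NormLinf{\gvel})$ exactly as in \eqref{eq:I2est}--\eqref{eq:I3est}. For the advection commutator $(D^\alpha\str,[D^\alpha,\vel\cdot\grad]\str)$, the Kato--Ponce inequality together with the CZ bound $\NormH{\vel}{m}\le C\NormH{\str}{m}$ produces a contribution bounded by $C\NormH{\str}{m}^2(\NormLinf{\gvel}+\NormLinf{\grad\str})$, yielding the master inequality
\[
\deriv{}{t}\NormH{\str}{m}\le C\NormH{\str}{m}\bigl(1+\NormLinf{\str}+\NormLinf{\gvel}+\NormLinf{\grad\str}\bigr).
\]
Two logarithmic Sobolev inequalities would then be invoked to close this. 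Because \eqref{eq:gvel} expresses $\gvel$ as a Calder\'on--Zygmund image of $\str$ plus a pointwise-bounded multiple of $\str$, the classical BKM log-inequality for CZ operators gives
\[
\NormLinf{\gvel}\le C\bigl(1+\NormLinf{\str}(1+\log^{+}\NormH{\str}{m})+\NormH{\str}{0}\bigr).
\]
The more delicate term $\NormLinf{\grad\str}$ is handled by a Brezis--Gallouet--Wainger inequality in $\R^3$, valid because $m\ge3$ ensures $\grad\str\in H^{m-1}\subset H^2(\R^3)$, producing a bound of the schematic form $\NormLinf{\grad\str}\le C(1+K\sqrt{\log(e+\NormH{\str}{m})})$, where $K$ depends on lower-order Sobolev norms of $\str$ that can be propagated uniformly on $[0,T^*)$ by auxiliary fractional-order energy estimates based on the $L^2$ bound.

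Substituting both log-Sobolev bounds into the master inequality, setting $Y(t)=\log(e+\NormH{\str(\cdot,t)}{m})$, dividing by $e+\NormH{\str}{m}$, and absorbing the sub-dominant $\sqrt{Y}$ factor into $1+Y$, one obtains
\[
\deriv{Y}{t}\le C\bigl(1+\NormLinf{\str}\bigr)(1+Y),
\]
and Gr\"onwall then gives $1+Y(t)\le(1+Y(0))\exp\bigl(C(T^*+M)\bigr)$ uniformly on $[0,T^*)$, so that $\NormH{\str(\cdot,t)}{m}$ remains bounded, contradicting \eqref{eq:div1}. The main obstacle is the $\NormLinf{\grad\str}$ contribution to the commutator: unlike $\gvel$, the tensor $\grad\str$ is not a Calder\'on--Zygmund image of $\str$, so the BKM log-Sobolev inequality does not apply to it directly. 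Resolving this requires a Brezis--Gallouet--Wainger type inequality—which is precisely what dictates the stronger hypothesis $m\ge3$ (ensuring $\grad\str\in H^2$) and necessitates the auxiliary propagation of an intermediate Sobolev norm; the remaining Gr\"onwall closure is then routine.
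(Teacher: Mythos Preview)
Your argument has a genuine gap at the step where you control $\NormLinf{\grad\str}$. The Brezis--Gallouet--Wainger inequality in $\R^3$ gives, at best,
\[
\NormLinf{\grad\str}\le C\bigl(1+\NormH{\grad\str}{3/2}\bigr)\sqrt{\log\!\bigl(e+\NormH{\str}{m}\bigr)}
= C\bigl(1+\NormH{\str}{5/2}\bigr)\sqrt{\log\!\bigl(e+\NormH{\str}{m}\bigr)},
\]
so the coefficient $K$ you need sits at the critical level $H^{5/2}$. Your claim that this intermediate norm ``can be propagated uniformly by auxiliary fractional-order energy estimates based on the $L^2$ bound'' is circular: an energy estimate at level $5/2$ (or any level $>3/2$) obtained via Kato--Ponce produces the \emph{same} commutator term $\NormH{\vel}{s}\NormLinf{\grad\str}$, so you face the identical obstacle there. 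Interpolating $\NormH{\str}{5/2}\le\NormH{\str}{0}^{1-5/(2m)}\NormH{\str}{m}^{5/(2m)}$ does not help either, since after dividing by $\NormH{\str}{m}$ you are left with a factor $\NormH{\str}{m}^{5/(2m)}\sim e^{(5/2m)Y}$ on the right-hand side of the $Y$-inequality, which is exponential in $Y$ and prevents closure by Gr\"onwall.

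The paper avoids this problem not by estimating $\NormLinf{\grad\str}$ but by never producing it. Instead of the Kato--Ponce commutator bound, it expands the advection term by Leibniz, drops the top-order piece via incompressibility, and estimates each remaining triple product $\bigl(D^\alpha\str,\,D^{\beta}(\gvel)\,D^{\alpha-\beta}\str\bigr)$ with a Gagliardo--Nirenberg interpolation (their Lemma and inequality \eqref{eq:GN3}). This yields the sharper energy inequality
\[
\deriv{}{t}\NormH{\str}{m}\le C\bigl(1+\NormLinf{\str}+\NormLinf{\gvel}\bigr)\NormH{\str}{m},
\]
with no $\NormLinf{\grad\str}$ at all. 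The remaining quantity $\NormLinf{\gvel}$ is then handled by a direct kernel-splitting estimate on \eqref{eq:gvel}, giving $\NormLinf{\gvel}\le C\NormLinf{\str}\bigl(1+\log_+\NormH{\str}{0}+\log_+\NormLq{\grad\str}{4}\bigr)$; since $\NormH{\str}{0}$ and $\NormLq{\grad\str}{4}$ obey the same exponential-in-$\int(1+\NormLinf{\str}+\NormLinf{\gvel})$ bound, a single Gr\"onwall on $M(t)=\int_0^t(1+\NormLinf{\str}+\NormLinf{\gvel})\,ds$ closes the loop. The key idea you are missing is this Gagliardo--Nirenberg treatment of the commutator, which eliminates the need for any Brezis--Gallouet--Wainger step.
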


The proof is similar in essence to the proof of the Beale-Kato-Majda theorem for the Euler  equations \cite{BKM84}. All is needed is an a-priori estimate of the form,
\begin{equation}
\NormH{\str(\cdot,t)}{m} \le C\brk{t,\int_0^t \NormLinf{\str(\cdot,s)} \,ds},
\label{eq:BKMineq}
\end{equation}
where $C$ is a continuous function of its arguments, hence \eqref{eq:div1} occurs only if \eqref{eq:div2} occurs. The estimate \eqref{eq:BKMineq} is derived in two steps, detailed in the next two subsections.

\subsection{A priori estimates for the $\Hm$ norm}
Our estimates rely on the following version of the
Gagliardo-Nirenberg inequality,
\[
\NormLq{D^k f}{q} \le C  \NormLinf{f}^{1-k/m}
\NormLq{D^m f}{{kq/m}}^{k/m},
\]
where $1\le k\le m$ and $1<p<\infty$, with $p=\frac{kq}{m}$. With
this, we prove the following lemma:

\begin{lem}
For $f,g,h\in\Hm$ the following triple-product inequality holds,
\begin{equation}
\int_{\R^3} |D^\alpha h|\, |D^\beta f|\, |D^{\alpha-\beta} g|\,d\x
\le C \NormH{h}{|\alpha|} \NormH{f}{|\alpha|}^{|\beta|/|\alpha|}
\NormH{g}{|\alpha|}^{|\alpha-\beta|/|\alpha|}
\NormLinf{f}^{1-|\beta|/|\alpha|}
\NormLinf{g}^{1-|\alpha-\beta|/|\alpha|} , \label{eq:GN2}
\end{equation}
where $\beta<\alpha$.
\end{lem}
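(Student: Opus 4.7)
The strategy is to combine Hölder's inequality with the Gagliardo--Nirenberg inequality stated just above the lemma. The idea is to place $|D^\alpha h|$ into $L^2$, so this factor is immediately controlled by $\NormH{h}{|\alpha|}$, and then to distribute the two remaining factors into $L^{p_2}$ and $L^{p_3}$ with exponents chosen so that, after applying Gagliardo--Nirenberg, the only top-order norms that appear are $L^2$-based, giving exactly $\NormH{f}{|\alpha|}^{|\beta|/|\alpha|}$ and $\NormH{g}{|\alpha|}^{|\alpha-\beta|/|\alpha|}$.

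Concretely, I take $p_2 = 2|\alpha|/|\beta|$ and $p_3 = 2|\alpha|/|\alpha-\beta|$. Since for multi-indices with $\beta\le\alpha$ one has $|\alpha-\beta|=|\alpha|-|\beta|$, a short check yields
\[
\frac{1}{2}+\frac{1}{p_2}+\frac{1}{p_3} = \frac{1}{2} + \frac{|\beta|+|\alpha-\beta|}{2|\alpha|} = 1,
\]
so Hölder applies and reduces the integral to $\NormH{h}{|\alpha|}\,\NormLq{D^\beta f}{p_2}\,\NormLq{D^{\alpha-\beta}g}{p_3}$. Applying Gagliardo--Nirenberg to the second factor with $k=|\beta|$, $m=|\alpha|$, $q=p_2$ gives
\[
\NormLq{D^\beta f}{p_2} \le C\,\NormLinf{f}^{1-|\beta|/|\alpha|}\,\NormLq{D^{|\alpha|}f}{|\beta|p_2/|\alpha|}^{|\beta|/|\alpha|},
\]
and the choice of $p_2$ is precisely what makes $|\beta|p_2/|\alpha|=2$, so the last factor is $\NormH{f}{|\alpha|}^{|\beta|/|\alpha|}$. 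The same computation applied to $g$ with $k=|\alpha-\beta|$, $q=p_3$ produces the analogous factor, and multiplying the three bounds gives \eqref{eq:GN2}.

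The only borderline case is $|\beta|=0$ (so that $D^\beta f = f$), in which case $p_2$ formally becomes $\infty$ and the Gagliardo--Nirenberg step degenerates. This case can be handled separately by applying Hölder with exponents $(2,\infty,2)$, which directly reproduces the desired inequality since the fractional exponents collapse to $0$ or $1$ exactly as needed. The symmetric endpoint $|\alpha-\beta|=0$ is excluded by the hypothesis $\beta<\alpha$.

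I do not foresee a serious obstacle; this is a standard Hölder-plus-Gagliardo--Nirenberg argument of the type underlying the Moser calculus inequalities in Sobolev spaces. The only care required is verifying the compatibility of the Hölder exponents via the identity $|\alpha-\beta|=|\alpha|-|\beta|$ and handling the degenerate endpoint cleanly.
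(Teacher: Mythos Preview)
Your proposal is correct and follows essentially the same approach as the paper: H\"older with exponents $(2,\,2|\alpha|/|\beta|,\,2|\alpha|/|\alpha-\beta|)$ followed by two applications of Gagliardo--Nirenberg, which is exactly what the paper does. Your explicit treatment of the degenerate endpoint $|\beta|=0$ is a welcome addition that the paper's proof leaves implicit.
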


\begin{proof}
We start with the triple product inequality
\[
\int_{\R^3} |D^\alpha h| |D^\beta f| |D^{\alpha-\beta} g|\,d\x \le
\NormH{D^\alpha h}{0} \NormLq{D^\beta f}{q} \NormLq{D^{\alpha-\beta} g}{p},
\]
where $1/p + 1/q = 1/2$. We then use twice the Gagliardo-Nirenberg inequality,
\[
\NormLq{D^\beta f}{q} \le C
\NormLinf{f}^{1-|\beta|/|\alpha|} \NormLq{D^\alpha
f}{{|\beta|q/|\alpha|}}^{|\beta|/|\alpha|}
\]
and
\[
\NormLq{D^{\alpha-\beta} g}{p} \le C
\NormLinf{g}^{1-|\alpha-\beta|/|\alpha|}  \NormLq{D^\alpha
g}{|\alpha-\beta|p/|\alpha|}^{|\alpha-\beta|/|\alpha|}.
\]
If we choose
\[
q = \frac{2|\alpha|}{|\beta|}
\Textand
p = \frac{2|\alpha|}{|\alpha-\beta|},
\]
then \eqref{eq:GN2} is obtained.
\end{proof}

We are going to make an extensive use of inequality \eqref{eq:GN2}
for the case where $f=\str$, $g=\gvel$ and $h=\str$ (more precisely,
$f,g,h$ are components of these  tensors). Then, combined with the
CZ inequality \eqref{eq:CZ2}, we get
\[
\brk{D^\alpha \str, (D^\beta \str)  [D^{\alpha-\beta} (\gvel)]} \le
C \NormH{\str}{|\alpha|}^2
\NormLinf{\str}^{1-|\beta|/|\alpha|}
\NormLinf{\gvel}^{1-|\alpha-\beta|/|\alpha|},
\]
which combined with Young's inequality finally gives,
\begin{equation}
\brk{D^\alpha \str, (D^\beta \str)  [D^{\alpha-\beta} (\gvel)]} \le
C \NormH{\str}{|\alpha|}^2  \brk{
\NormLinf{\str} +   \NormLinf{\gvel}}.
\label{eq:GN3}
\end{equation}

For every $|\alpha|\le m$
the $L^2$-norm of the $\alpha$-th derivative of $\str$ satisfies the energy equation
\begin{equation}
\begin{split}
\half \deriv{}{t} \NormH{D^\alpha\str}{0}^2 +
\frac{1}{\lambda} \NormH{D^\alpha\str}{0}^2  &=
-\brk{D^\alpha\str, D^\alpha(u_k\cdot\partial_k \str)} \\
&+ 2 \brk{D^\alpha\str, D^\alpha[(\str(\gvel)]} + \frac{2\nu_p}{\lambda} \brk{D^\alpha\str,D^\alpha(\gvel)}.
\end{split}
\label{eq:energy_BKM}
\end{equation}
The last term is easily estimated using the Cauchy-Schwarz inequality and the CZ
inequality \eqref{eq:CZ2},
\[
\brk{D^\alpha\str,D^\alpha(\gvel)} \le C \NormH{D^\alpha\str}{0}^2.
\]
The middle term can be written as
\[
\brk{D^\alpha\str, D^\alpha[(\str(\gvel)]} = \sum_{\beta\le\alpha}
\brk{D^\alpha\str, (D^\beta \str)(D^{\alpha-\beta}\gvel)},
\]
which is a finite sum of terms, each of which can be bounded  using \eqref{eq:GN3}.

Remains the advection term. Because $\vel$ is incompressible, the
term $u_k\partial_k(D^\alpha\str)$ vanishes, which means that $\vel$
is differentiated at least once, and we can use \eqref{eq:GN3} once
again. Thus, we obtain the inequality,
\[
\half \deriv{}{t} \NormH{D^\alpha\str}{0}^2 + \frac{1}{\lambda} \NormH{D^\alpha\str}{0}^2 \le
C\Brk{1 +
\NormLinf{\str} +   \NormLinf{\gvel}} \NormH{D^\alpha\str}{0}^2,
\]
and summing up over all $|\alpha|\le m$,
\[
\deriv{}{t} \NormH{\str}{m} + \frac{1}{\lambda} \NormH{\str}{m} \le
C\brk{1 + \NormLinf{\str} +   \NormLinf{\gvel}} \NormH{\str}{m}.
\]
A simple integration yields,
\begin{equation}
\NormH{\str}{m} \le \exp\Brk{C\int_0^t \brk{1 +
\NormLinf{\str} + \NormLinf{\gvel}}\,ds}\, \NormH{\str_0}{m}.
\label{eq:apriori}
\end{equation}

A comment:  the energy inequality \eqref{eq:energy_BKM} is only
formal since we have not shown that the $\Hm$ norm of $\str$ was
differentiable. To rectify this delicacy, one has to carry all
estimates with the mollified solutions $\stre$, which are
differentiable in all Sobolev spaces, and take the limit $\e\to0$,
only once we have obtained a final estimate for $\NormH{\stre}{m}$
in the integrated form .

We will need one more estimate. For all indices $i,j,k$ we have
\[
\pd{}{t} \partial_k\sigma_{ij} + \frac{1}{\lambda} \partial_k\sigma_{ij} =
-\partial_k \brk{u_l \partial_l \sigma_{ij}} +
\partial_k \brk{\sigma_{il} (\partial_l u_j) + (\partial_l u_i)\sigma_{lj}} +
\frac{\nu_p}{\lambda} \partial_k \brk{\partial_i u_j + \partial_j u_i}.
\]
Multiplying by $(\partial_k\sigma_{ij})^3$ (with summation over all indexes) and integrating over $\R^3$ we get
\[
\begin{split}
\frac{1}{4} \deriv{}{t} \NormLq{\grad\str}{4}^4 + \frac{1}{\lambda} \NormLq{\grad\str}{4}^4 &=
-\int_{\R^3} \brk{\partial_k\sigma_{ij}}^3
\partial_k \brk{u_l \partial_l \sigma_{ij}} \,d\x \\
& \hspace{-1cm}+ 2 \int_{\R^3} \brk{\partial_k\sigma_{ij}}^3 \partial_k \Brk{\sigma_{il} (\partial_l u_j)}\,d\x
+ \frac{2\nu_p}{\lambda} \int_{\R^3}  \brk{\partial_k\sigma_{ij}}^3 \partial_k \brk{\partial_i u_j}\,d\x.
\end{split}
\]
Using for the first two terms the triple product inequality, and the H\"older inequality for the third, we get
\[
\begin{split}
\frac{1}{4} \deriv{}{t} \NormLq{\grad\str}{4}^4 + \frac{1}{\lambda} \NormLq{\grad\str}{4}^4
&\le C \big[\NormLq{\grad\str}{4}^4 \NormLinf{\gvel} + \NormLinf{\str} \NormLq{\grad\gvel}{4}
\NormLq{\grad\str}{4}^3 \\
&+
 \NormLq{\grad\str}{4}^3 \NormLq{\grad\grad\vel}{4} \big],
\end{split}
\]
and after applying once again the CZ inequality \eqref{eq:CZ2},
\[
 \deriv{}{t} \NormLq{\grad\str}{4}
\le C\Brk{1+ \NormLinf{\gvel} + \NormLinf{\str}} \NormLq{\grad\str}{4},
\]
from which we get
\begin{equation}
\NormLq{\grad\str}{4} \le \exp\Brk{C\int_0^t \brk{1
+ \NormLinf{\str} +   \NormLinf{\gvel}}\,ds}\,
\NormLq{\grad\str_0}{4}. \label{eq:L4}
\end{equation}

\subsection{$L^\infty$ estimate of $\gvel$}
So far we have shown in \eqref{eq:apriori} that if
the solution $\str$ breaks down at time $T^*$ then
\[
\lim_{t\nearrow T^*} \int_0^t \brk{1 + \NormLinf{\str} +   \NormLinf{\gvel}}\,ds = \infty.
\]
To complete the proof of Theorem~\ref{th:BKM} it is sufficient to show that
\[
\int_0^t \brk{1 + \NormLinf{\str} +   \NormLinf{\gvel}}\,ds \le C\brk{t,
\int_0^t \NormLinf{\str} \,ds}.
\]
Thus, we need to estimate $\NormLinf{\gvel}$ in terms of $\NormLinf{\str}$. Note that the CZ inequality provides a bound for $\NormLq{\gvel}{p}$ in terms of $\NormLq{\str}{p}$ for all $p$ (such a bound exists within any of the Sobolev $W^{m,p}$ norms), but the prefactor  is linear in $p$, hence $p$ cannot be taken to be infinite. Instead, one has to perform a more delicate analysis.

Consider the integral relation \eqref{eq:gvel} between $\str$ and $\gvel$. We  split the domain of integration into an ``outer domain" $|\y|>R$, a ``middle annulus" $\e<|\y|<R$, and  an ``inner disc", $|\y|<\e$, namely,
\[
\gvel = -\frac{1}{5\nu_s}\brk{\str - \frac{\bs{I}}{3} \tr\str} + \frac{1}{8\pi\nu_s}
\brk{I_1 + I_2 + I_3},
\]
where
\[
\begin{aligned}
I_1(\x) &= \int_{R<|\y|} \bs{M}^{(2)}(\y): \str(\x-\y)\,d\y \\
I_2(\x) &= \int_{\e<|\y|<R} \bs{M}^{(2)}(\y): \str(\x-\y)\,d\y \\
I_3(\x) &= \PV\int_{|\y|<\e} \bs{M}^{(2)}(\y): \str(\x-\y)\,d\y.
\end{aligned}
\]
Recall that $\bs{M}^{(2)}$ is homogeneous of degree $-3$ and averages to zero on the unit sphere.

The ``outer" integral is estimated using the Cauchy-Schwarz inequality,
\[
|I_1(\x)|  \le  \frac{C}{R^{3/2}} \NormH{\str}{0}.
\]
The ``middle" integral is estimated by taking out the infinity norm of the stress
\[
\Abs{I_2(\x)} \le  C \NormLinf{\str} \log \frac{R}{\e},
\]
For the ``inner" integral, we exploit the fact that $\bs{M}^{(2)}$ averages to zero on the unit sphere to  subtract
\[
\PV \int_{|\y|<\e}  \bs{M}^{(2)}(\y): \str(\x)\,d\y = 0,
\]
so that,
\[
I_3 = \PV\int_{|\y|<\e}\bs{M}^{(2)}(\y): \Brk{\str(\x-\y)-\str(\x)}\,d\y.
\]
By the mean-value theorem
(this assumes that $\str\in C^1(\R^3)$, which is the case since
 $m>5/2$), we obtain
\[
\begin{split}
|I_3(\x) &\le \int_{|\y|<\e} \Abs{\bs{M}^{(2)}(\y):
\Brk{(\y\cdot\grad)\str(\bs{\xi})}}\,d\y \le C
\brk{\int_{|\y|<\e}|{M}^{(2)}(\y) \y|^p}^{1/p}
\NormLq{\grad\str}{q},
\end{split}
\]
where we used H\"older's inequality, and $\bs{\xi}$ represents an intermediate point. Since $\bs{M}^{(2)}$ is homogeneous of degree $-3$, this $L^p$-norm is finite provided that $2-2p>-1$, i.e., $p<3/2$,  and consequently $q>3$. Setting $q=4$ and
combining all three contributions, we get
\[
\Abs{\PV\int_{\R^3} \bs{M}^{(2)}(\y): \str(\x-\y)\,d\y} \le
C\,\brk{\frac{1}{R^{3/2}}  \NormH{\str}{0} +
\NormLq{\grad\str}{4} \e^{1/4} + \NormLinf{\str}
\log \frac{R}{\e}}.
\]
It remains to choose $R,\e$ such to minimize the bound. Taking
\[
R = \brk{\frac{\frac32 \NormH{\str}{0}}{ \NormLinf{\str}}}^{2/3}
\quad\text{ and }\quad \e =
\brk{\frac{4\NormLinf{\str}}{\NormLq{\grad\str}{4}}}^{1/4}
\]
we finally obtain
\begin{equation}
\NormLinf{\gvel} \le
C\, \NormLinf{\str}\brk{1 + \log_+ \NormH{\str}{0} + \log_+ \NormLq{\grad\str}{4} },
\label{eq:gvel_ineq}
\end{equation}
where $\log_+ x = \max(\log x,0)$.

The estimate \eqref{eq:apriori} with $m=0$ gives,
\[
\NormH{\str}{0} \le \exp\Brk{C\int_0^t \brk{1 +
\NormLinf{\str} + \NormLinf{\gvel}}\,ds}\, \NormH{\str_0}{0}.
\]
and similarly, from \eqref{eq:L4},
\[
\NormLq{\grad\str}{4} \le
 \exp\Brk{C\int_0^t \brk{1 + \NormLinf{\str} +   \NormLinf{\gvel}}\,ds}\,
\NormLq{\grad\str_0}{4}.
\]
Substituting into \eqref{eq:gvel_ineq} we get
\[
\NormLinf{\gvel} \le C \NormLinf{\str}\Brk{ 1 +
\int_0^t \brk{1 + \NormLinf{\str} +   \NormLinf{\gvel}}\,ds}.
\]
If we define
\[
M(t) = \int_0^t \Brk{1 + \NormLinf{\str} +   \NormLinf{\gvel}}\,ds
\Textand
N(t) = \int_0^t \NormLinf{\str}\,ds,
\]
then we have an integral inequality of the form
\[
M'(t) \le C N'(t)[1 + M(t)],
\]
which we readily integrate,
\begin{equation}
\int_0^t \Brk{1 + \NormLinf{\str} +   \NormLinf{\gvel}}\,ds \le
\exp\brk{C \int_0^t  \NormLinf{\str}\,ds}.
\label{eq:4.10}
\end{equation}
Combining with \eqref{eq:apriori} we have proved Theorem~\ref{th:BKM}.

Note that \eqref{eq:apriori} together with \eqref{eq:4.10} yield a doubly-exponential bound,
\[
\NormH{\str}{m} \le  \exp\Brk{C\exp\brk{C \int_0^t
\NormLinf{\str}\,ds}}\,\NormH{\str_0}{m}.
\]
In this context, it is noteworthy that a similar doubly-exponential bound was derived in BKM for the Euler equation. Ponce in \cite{Pon85} derives a singly-exponential bound using as control parameter the $L^\infty$ norm of the deformation tensor, rather than the vorticity as in the original BKM paper.

\section{Discussion}
In this paper we derived a breakdown condition for solutions of the
Oldroyd-B model in $\R^3$ in the limit of zero Reynolds number. This
condition is analogous to the breakdown condition of
Beale-Kato-Majda for Newtonian fluids. It is noteworthy that the
elastic relaxation time, $\lambda$ (which in non-dimensional
formulations is the Weissenberg number) plays no role in our
analysis. In fact, nothing changes if we set $\lambda=\infty$, or,
alternatively, set $\lambda=\infty$ but retain the ratio
$\nu_p\lambda$ constant (which corresponds to the Kelvin-Voigt model
for a viscoelastic solid).

The main implication of our result is that the efforts toward a
global-in-time well-posedness theory should focus on the control of
the $L^\infty$ norm of the stress.

In a more physically realistic setting, one should
consider the same problem in a bounded domain $\Omega$. Let us
assume for simplicity homogeneous Dirichlet boundary conditions for
$\vel$, and a sufficiently smooth boundary $\partial\Omega$. The Caldero\'n-Zygmund
inequality holds in this case (see e.g. \cite{Yud63,Gal94}), so that local-in-time
existence can be proved as for the infinite domain.
Differences arise in
the proof of the BKM criterion, where we need an $L^{\infty}$
estimate for $\gvel$. For a bounded domain, the integral
representation \eqref{eq:gvel} can be rewritten with the same kernel
integrated over $\Omega$, plus a boundary-term contribution,
exactly as in the Poisson representation formula (see \cite{Gal94}).
Another alternative is to use the following Green
representation formula,
\[
\gvel(\x) = \int_{\Omega}
\mathcal{G}_{\Omega}(\y)\cdot\div\sigma(\x-\y)d\y
\]
where $\mathcal{G}_{\Omega}$ is a singular kernel that depends on
$\Omega$. The existence of $\mathcal{G}_\Omega$ along with
pointwise estimates has to be proved using elliptic
regularity theory (see Ferrari \cite{Fer93} for details).

\appendix
\section{Some inequalities}
\label{app:ineq}
In this appendix we list a number of inequalities used repeatedly in Sections~\ref{sec:local} and \ref{sec:BKM}. We recall that $\Hm$ denotes the Sobolev spaces of scalar, vector and multi-dimensional tensor fields, with the corresponding norm $\NormH{\cdot}{m}$. The $L^2$-norm, which coincides with the $H^0$-norm is denoted by $\NormH{\cdot}{0}$. The $L^\infty$ norm is denoted by $\NormLinf{\cdot}$. Weak derivatives are denoted by $D^\alpha$, where $\alpha = (\alpha_1,\alpha_2,\alpha_3)$ is a multi-index.

\paragraph{Mollifiers}
For $f\in
L^p(\R^3)$, $1\le p\le \infty$,
the mollification operator, $\Je$, is defined  by
\begin{equation}
(\Je f)(\x) = \frac{1}{\e^3} \int_{\R^3} \phi\brk{\frac{\x-\y}{\e}} f(\y)\,d\y,
\label{eq:Je}
\end{equation}
where $\phi:\R^3\to\R$ is a radially symmetric, positive, compactly supported $C^\infty$ function, satisfying $\int_{\R^3}\phi(\x)d\x=1$. The important properties satisfied by $\Je$  are
\begin{itemize}
\item $\Je$ commutes with (distributional) derivatives.
\item $\Je$ is symmetric with respect to the $\Ltwo$ inner product.
\item $\Je:\Hm\to\Hm\cap\Cinf(\R^3)$.
\item There exists a $C>0$ such that for every $f\in\Hm$,
\begin{equation}
\NormH{\Je f - f}{m-1} \le C \e \NormH{f}{m}.
\label{eq:J1}
\end{equation}
\item For every $f\in\Hm$ and $k\ge 0$,
\begin{equation}
\NormH{\Je f}{m+k} \le \frac{c_{mk}}{\e^k} \NormH{f}{m}.
\label{eq:J2}
\end{equation}
\item For every $f\in\Hm$ and multi-index $\alpha$,
\begin{equation}
\NormLinf{\Je D^\alpha f} \le \frac{c_{|\alpha|}}{\e^{3/2 + |\alpha|}} \NormH{f}{0}.
\label{eq:J3}
\end{equation}
\end{itemize}

\paragraph{Banach algebra property of Sobolev spaces}
For $m>3/2$ the Sobolev space  $\Hm$ is a Banach algebra, i.e.,
there exists a constant $C>0$ such that for all
$f,g\in\Hm$,
\begin{equation}
\NormH{f g}{m} \le C \NormH{f}{m} \NormH{g}{m}.
\label{eq:BanachAlgebra}
\end{equation}

\paragraph{Sobolev calculus inequalities}
For all $m\in\{0,1,2,..\}$ there exists a constant $C>0$ such that
for all $f,g\in\Linf\cap\Hm$,
\begin{equation}
\NormH{f g}{m} \le C\brk{\NormLinf{f} \NormH{g}{m} +  \NormH{f}{m} \NormLinf{g}}.
\label{eq:calc_ineq}
\end{equation}
For $m>3/2$ the Sobolev embedding $\Hm\subset\Linf$ implies that this inequality is valid for all $f,g\in\Hm$. Moreover, for $f\in\Hm$, $g\in H^{m-1}(\R^n)$, and $|\alpha|\le m$,
\begin{equation}
\NormH{D^\alpha(f g) - f D^\alpha g}{0} \le C
\Brk{\NormLinf{\grad f} \NormH{g}{m-1} + \NormH{f}{m} \NormLinf{g}}.
\label{eq:calc_ineq2}
\end{equation}
This inequality holds even if each of the two subtracted terms on the left hand side is not in $\Ltwo$ \cite{BKM84}.

\paragraph{Gagliardo-Nirenberg inequalities}
The classical Gagliardo-Nirenberg inequality is
\[
\NormLq{D^k f}{q} \le C  \NormLq{f}{r}^{1-\theta}  \NormLq{D^m f}{p}^{\theta},
\]
where $\theta=k/m\in(0,1)$, $1\le p,r \le \infty$, and
\[
\frac{1}{q} = \frac{\theta}{p} + \frac{1-\theta}{r}.
\]
For the  particular case $r=\infty$ we have
\begin{equation}
\NormLq{D^k f}{q} \le C  \NormLinf{f}^{1-k/m}  \NormLq{D^m f}{{kq/m}}^{k/m}\,\,,
\label{eq:GN}
\end{equation}
where $1<p<\infty$ and $1\le m\le k$.

\paragraph{Sobolev embedding theorems (\cite{Ada75} Ch. 5, \cite{Bre83} p. 168)}
Let $m\ge 1$ an integer and $1\le p<\infty$. Then:
\[
\begin{array}{lll}
\textrm{If} \,\,\, \frac{1}{p}-\frac{m}{n}>0\,\,\, & \,\,\, \textrm{then}\,\,\,
& W^{m,p}(\mathbb{R}^n)\subset L^q(\mathbb{R}^n) \,\,\, \textrm{with} \,\,\, \frac{1}{q} = \frac{1}{p}-\frac{m}{n}\\
\textrm{If} \,\,\, \frac{1}{p}-\frac{m}{n}=0\,\,\, & \,\,\, \textrm{then}\,\,\,
& W^{m,p}(\mathbb{R}^n)\subset L^q(\mathbb{R}^n) \,\,\, , \,\,\, \forall q\in [p,+\infty)\\
\textrm{If} \,\,\, \frac{1}{p}-\frac{m}{n}<0\,\,\, & \,\,\, \textrm{then}\,\,\,
& W^{m,p}(\mathbb{R}^n)\subset L^{\infty}(\mathbb{R}^n)
\end{array}
\]
In this paper we use extensively the third embedding for $p=2$ and $n=3$, i.e.
\[
\Hm\subset\Linf \qquad\text{ if }\qquad m>\frac{3}{2}.
\]
In fact, this is a continuous embedding, i.e.,
\begin{equation}
\NormLinf{f} \le C\,\NormH{f}{m}.
\label{eq:SobEmb}
\end{equation}

\paragraph{The Calder\'on-Zygmund inequality}
Let $K:\R^3\to\R$ be a homogeneous function of degree $-3$ that averages to zero on the unit sphere, and for $f\in L^q(\R^3)$, $q\ge2$,  define
\[
g(\x) = \PV\int_{\R^3} K(\y) f(\x-\y)\,d\y.
\]
Then $g\in L^q(\R^3)$ and
\begin{equation}
\NormLq{g}{q} \le C q\,\NormLq{f}{q}.
\label{eq:CZ1}
\end{equation}
Because the constant grows unbounded with $q$, this inequality does not carry to the $L^\infty$-norm;
see, for example,  Stein \cite{Ste70} for a proof. In particular, since the same kernel relates $D^\alpha f$ and $D^\alpha g$, it follows that $f\in\Hm$ implies $g\in\Hm$ with
\begin{equation}
\NormH{g}{m} \le C \,\NormH{f}{m}.
\label{eq:CZ2}
\end{equation}


{\bfseries Acknowledgments}
RK and CM were partially supported by the Israel Science Foundation.
The work of EST was supported in part by
the NSF grant no.~DMS-0504619, the ISF grant no.~120/6, and the BSF
grant no.~2004271.

\end{document}